%%%%%%%%%%
%%%%%%%%%% GENERIC HEADER
%%%%%%%%%%

\documentclass[11pt]{amsart}

%--- Packages ---------------

\usepackage{color}
\usepackage{xifthen}
\usepackage{amssymb}
\usepackage[table]{xcolor}
\usepackage{ytableau}
\usepackage{tikz}
\usepackage{varwidth}
\usepackage[all,cmtip]{xy}
\usetikzlibrary{calc, cd}
\usepackage[enableskew]{youngtab}
\usepackage{geometry}
\usepackage{enumerate}
\usepackage{array}
\usepackage[normalem]{ulem}
\usepackage{scalerel}
\usepackage{tikz-cd}
\usepackage[pagebackref]{hyperref}

%--- Layout -----------------
\setlength{\textwidth}{\paperwidth}
\addtolength{\textwidth}{-2in}
\setlength{\textheight}{\paperheight}
\addtolength{\textheight}{-2in}
\calclayout

%--- Theorem environments ---

\newtheorem{thm}{Theorem}[section]
\newtheorem{lemma}[thm]{Lemma}
\newtheorem{prop}[thm]{Proposition}

\newtheorem{cor}[thm]{Corollary}

% customthm used when I want to give custom named (e.g. Theorem A)

\newenvironment{customthm}[1]
  {\innercustomthm}
  {\endinnercustomthm}

\theoremstyle{definition} 
\newtheorem{eg}[thm]{Example}

\newtheorem{defn}[thm]{Definition}

\newtheorem{rem}[thm]{Remark}
\newtheorem{qu}[thm]{Question}

%--- Macros -----------------

% Frequently used macros

\newcommand{\ds}{\displaystyle}

% Blackboard bold symbols

\newcommand{\NN}{\mathbb{N}}
\newcommand{\PP}{\mathbb{P}}

\newcommand{\ZZ}{\mathbb{Z}}

% Caligraphic symbols

\newcommand{\cL}{\mathcal{L}}

\newcommand{\cO}{\mathcal{O}}

% Operators
% For some reason I put in using two different syntaxes; may bring in line someday
\newcommand{\on}{\operatorname}

\newcommand{\inv}{\on{inv}}

\DeclareMathOperator{\Pic}{Pic}

% Some flag macros

% Commenting

\newcommand{\iou}[1][]{
    \ifthenelse{\equal{#1}{}}{{\color{blue}\{IOU\}}}
    {{\color{blue}\{IOU: #1\}}}
}

\AtBeginDocument{%
   \def\MR#1{}
}

%%%%%%%%%%
%%%%%%%%%%
%%%%%%%%%%

\title{ (Hurwitz--)Brill--Noether general marked graphs via the Demazure product }
\author[N. Pflueger]{Nathan Pflueger}\address{Department of Mathematics and Statistics, Amherst College}\email{npflueger@amherst.edu}
\date{\today}

%--- Optional settings
%\usepackage{showkeys}
%\usepackage{setspace}
%\doublespace
\usepackage{mathdots}
\usetikzlibrary{arrows.meta,automata, decorations.pathreplacing, patterns}
\usepackage{bm}
\usepackage{multicol}

%----------- Special macros
\newcommand{\sa}{s_\alpha}

\newcommand{\st}{s_\tau}
\newcommand{\sti}{s_{\tau^{-1}}}
\newcommand{\sbe}{s_\beta}
\newcommand{\sab}{s_{\alpha \star \beta}}
\newcommand{\svwd}{s^{v,w}_D}
\newcommand{\tvwd}{\tau^{v,w}_D}

\newcommand{\siom}{s_{\iota_m}}

\newcommand{\Inv}{\operatorname{Inv}}
\newcommand{\rg}{r_\Gamma}
\newcommand{\rga}{r_{\Gamma_1}}
\newcommand{\rgb}{r_{\Gamma_2}}

\newcommand{\sk}[1]{\sigma^k_{#1}}
\newcommand{\skm}{\sk{m}}
\newcommand{\ssk}[1]{s_{\sk{#1}}}
\newcommand{\sskm}{\ssk{m}}

\newcommand{\ts}[1]{\widetilde{\Sigma}_{#1}}

\newcommand{\bmu}{{\bm{\mu}}}

\theoremstyle{theorem}

%-----------

\begin{document}
\maketitle

\begin{abstract}
This paper gives a novel and compact proof that a metric graph consisting of a chain of loops of torsion order $0$ is Brill--Noether general (a theorem of Cools--Draisma--Payne--Robeva), and a finite or metric graph consisting of a chain of loops of torsion order $k$ is Hurwitz--Brill--Noether general in the sense of splitting loci (a theorem of Cook-Powell--Jensen). In fact, we prove a generalization to (metric) graphs with two marked points, that behaves well under vertex gluing. The key construction is a way to associate permutations to divisors on twice-marked graphs, simultaneously encoding the ranks of every twist of the divisor by the marked points. Vertex gluing corresponds to the Demazure product, which can be formulated via tropical matrix multiplication.
\end{abstract}

%%%%%
%%%%%
\section{Introduction}
\label{sec:intro}

Combinatorial Brill--Noether theory, as inaugurated in \cite[\S 3]{bakerSpec},\cite{cdpr} concerns the census of all pairs $(\deg D, r(D))$ of degrees and ranks of divisors on a metric graph or finite graph $\Gamma$, often with the aim of studying algebraic curves that specialize to $\Gamma$. Hereafter, ``graph'' refers agnostically to either a metric or finite graph, and ``points'' of a graph are vertices for a finite graph, or any point along an edge for a metric graph. A \emph{divisor} on a graph is a $\ZZ$-linear combination of points, which is commonly viewed as a configuration of chips, some of which represent debt. An \emph{effective divisor} is a chip configuration with no debt; the \emph{degree} is the total number of chips; divisors are sorted into \emph{linear equivalence classes} via chip-firing moves, and the \emph{(Baker--Norine) rank} measures the degrees of freedom among the effective divisors in a given class. These definitions mirror analogous definitions for divisors on algebraic curves. The geometric and combinatorial versions of this census have a robust interface via specialization from curves to graphs, and lifting from graphs to curves, resulting in a rich intermingling of geometry, algebra, and combinatorics. We focus herein on the combinatorial story; geometric analogs of this content will appear in forthcoming work
\footnote{Although this forthcoming work is not finished, I am happy to provide a draft to interested readers.}
.

From the beginning, \emph{chains of loops} are celebrities in this story; these method actors perfectly assume their role as ``general algebraic curves,'' when the loops have general torsion order (Definition \ref{defn:torsionOrder}). More recently, they have starred in the new topic of Hurwitz--Brill--Noether theory, where torsion orders are taken to be the integer $k \geq 2$, and the chain gives an uncanny performance a ``general degree-$k$ cover of $\PP^1$.'' A very brief overview of Hurwitz--Brill--Noether theory, and references, are given in Section \ref{sec:splittingLoci}.

Newcomers often ask: \emph{why these graphs, specifically?} This paper offers an answer to that question. In a word: it is not chains of loops that are well-adapted to (Hurwitz--)Brill--Noether theory, but \emph{vertex gluings}. Define the vertex gluing of two twice-marked graphs $(\Gamma_1, v_1, w_2), (\Gamma_2, v_2, w_2)$ to be $(\Gamma, v_1, w_2)$, where $\Gamma$ is obtained by gluing $w_1$ to $v_2$; see Figure \ref{fig:vertexGluing}. The chain of loops is simply the end of the road: a graphs that is decomposed as much as possible via vertex gluing.
Our approach neatly folds together the Hurwitz--Brill--Noether story and Brill--Noether story.
We formulate in Definition \ref{defn:kgt} a genericity condition for twice-marked graphs called \emph{$k$-general transmission}. This paper's main results are as follows.

\begin{customthm}{A}
\label{thm:main}
If two twice-marked graphs have $k$-general transmission, then so does their vertex gluing.
A genus $1$ graph has $k$-general transmission if and only if it has torsion order $k$.
In particular, a chain of $k$-torsion loops has $k$-general transmission.
\hfill (Proof on page \pageref{proof:main})
\end{customthm}

\begin{customthm}{B}
\label{thm:bnGeneral}
If $(\Gamma, v, w)$ has $0$-general transmission, then it is Brill--Noether general in the following sense: for every degree-$d$ divisor $D$ on $\Gamma$, if $r(D) = r$ and $D$ has vanishing sequences $(a_i)_{i=0}^r$ and $(b_i)_{i=0}^r$ at $v$ and $w$ respectively, then the \emph{adjusted Brill--Noether number} is nonnegative:
$$\ds \rho = g - (r+1)(g-d+r) - \sum_{i=0}^r (a_i-i) - \sum_{i=0}^r (b_i - i) \geq 0.$$
Here, the \emph{vanishing sequence} of a divisor $D$ at a point $u$ is the sequence $(a_i)_{0 \leq i \leq r(D)}$ defined by $a_i = \max \{a \in \ZZ: r(D-au) \geq r-i\}$.
\hfill (Proof on page \pageref{proof:bnGeneral})
\end{customthm}

\begin{customthm}{C}
\label{thm:hbnGeneral}
If $k \geq 2$ and $(\Gamma, v, w)$ has $k$-general transmission, then $kv \sim kw$, $r(kv) \geq 1$, and $(\Gamma, v, w)$ is Hurwitz--Brill--Noether general, in the following sense: every divisor $D$ on $\Gamma$ belongs to a splitting type locus $W^\bmu(\Gamma)$ with respect to $F = kv$ such that $|\bmu| \leq g$. 
See Section \ref{sec:splittingLoci} for terminology on splitting loci.
\hfill (Proof on page \pageref{proof:hbnGeneral})
\end{customthm}

\begin{rem}
Theorem \ref{thm:bnGeneral} shows that $0$-general transmission implies Brill--Noether generality, but the converse is false; $0$-general transmission is a strictly stronger condition. 
Indeed, a chain of loops is Brill--Noether general provided that the torsion orders are either $0$ \emph{or sufficiently large}; see \cite{cdpr,pflChains} for precise statements.
\end{rem}

A weaker form of Theorem \ref{thm:bnGeneral}, incorporating vanishing orders at only one marked point, is proved in \cite{pflChains}.
By discarding all the vanishing order terms except $(a_r-r)$, Theorem \ref{thm:bnGeneral} implies the Brill--Noether generality condition proved for chains of loops in \cite[Theorem 1.1]{cdpr}; in particular, it implies that the Brill--Noether rank $w^r_d(\Gamma)$ in the sense of \cite{limPaynePotashnik} is at most $\rho$. 
The Hurwitz--Brill--Noether generality condition in Theorem \ref{thm:hbnGeneral} was proved for chains of loops by Cook-Powell and Jensen \cite[Theorem 1.2]{cpjMethods}; a weaker form, without the splitting loci terminology, was proved in \cite[Corollary 3.6]{pflKGonal}

The basis of our analysis is a way to associate permutations, which we call \emph{transmission permutations} to divisors on twice-marked graphs. These permutations contain all the information needed to compute ranks of divisors on chains of such twice-marked graphs.
A crucial advantage of the transmission permutation point of view is that it behaves well under vertex gluing, and thereby provides a route to constructing new (Hurwitz--)Brill--Noether general graphs of any genus: if a few more lower-genus graphs with $k$-general transmission are identified, then these can be mixed and matched like beads on a string to build personal, Brill--Noether general gifts for your special someone. One of my principal hopes in writing this paper is that it will lead to the identification of a broader class of (Hurwitz--)Brill--Noether general graphs.

%%%%%
%%%%%
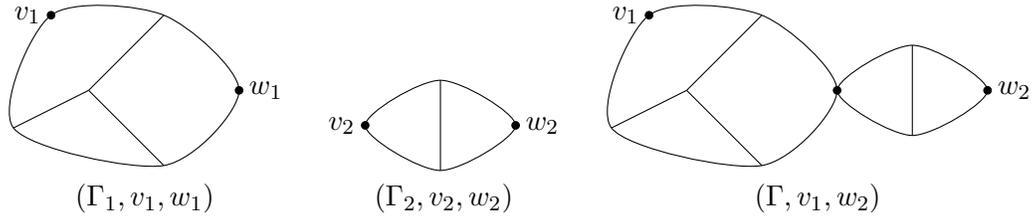
\begin{figure}
\begin{tabular}{ccc}

\begin{tikzpicture}
\coordinate (v1) at (-2.5,1);
\coordinate (w1) at (0,0);
\coordinate (a) at (-1,1);
\coordinate (l) at (-3,-0.5);
\coordinate (r) at (-1,-1);
\coordinate (c) at (-2,0);

\draw[fill] (v1) circle[radius=0.05] node[left] {$v_1$};
\draw[fill] (w1) circle[radius=0.05] node[right] {$w_1$};
\draw plot [smooth cycle]  coordinates {(v1) (a) (w1) (r) (l)};
\draw (a) -- (c);
\draw (l) -- (c);
\draw (r) -- (c);
\end{tikzpicture}
&
\begin{tikzpicture}
\coordinate (v2) at (w1);
\coordinate (w2) at (2,0);
\coordinate (a2) at (1,0.6);
\coordinate (b2) at (1,-0.6);
\draw plot [smooth cycle] coordinates {(v2) (a2) (w2) (b2)};
\draw (a2) -- (b2);
\draw[fill] (v2) circle[radius=0.05] node[left] {$v_2$};
\draw[fill] (w2) circle[radius=0.05] node[right] {$w_2$};

\end{tikzpicture}
&
\begin{tikzpicture}
\coordinate (v1) at (-2.5,1);
\coordinate (w1) at (0,0);
\coordinate (a) at (-1,1);
\coordinate (l) at (-3,-0.5);
\coordinate (r) at (-1,-1);
\coordinate (c) at (-2,0);

\coordinate (v2) at (w1);
\coordinate (w2) at (2,0);
\coordinate (a2) at (1,0.6);
\coordinate (b2) at (1,-0.6);

\draw[fill] (v1) circle[radius=0.05] node[left] {$v_1$};
\draw[fill] (w1) circle[radius=0.05] node[above] {};
\draw plot [smooth cycle]  coordinates {(v1) (a) (w1) (r) (l)};
\draw (a) -- (c);
\draw (l) -- (c);
\draw (r) -- (c);

\draw plot [smooth cycle] coordinates {(v2) (a2) (w2) (b2)};
\draw (a2) -- (b2);
\draw[fill] (w2) circle[radius=0.05] node[right] {$w_2$};

\end{tikzpicture}\\
$(\Gamma_1, v_1, w_1)$
&
$(\Gamma_2, v_2, w_2)$
&
$(\Gamma, v_1, w_2)$
\end{tabular}\\
\caption{
An example of a vertex gluing.
}
\label{fig:vertexGluing}
\end{figure}
%%%%%
%%%%%

\subsection{How do you quantify how special a divisor is?}

Brill--Noether theory of curves or graphs concerns the categorization of divisor classes according to ``how special'' they are, and predicts the codimension, within the Picard group, of the locus of equally special divisors. The word ``codimension'' can be taken literally within the Picard group of a curve or a metric graph, but must be taken figuratively for a finite graph.

 The subject began by categorizing divisor classes by degree and rank, or equivalently (using Riemann-Roch) by the two numbers $r(D)$ and $r(K-D)$, where $K$ denotes the canonical divisor. The expected codimension is $(r(D)+1)(r(K-D)+1)$; if $r = r(D)$, $d = \deg D$, and $g$ is the genus, this is equal to $(r+1)(g-d+r)$, an expression near and dear to acolytes of Brill--Noether theory. Since the advent of limit linear series in the 1980s, it is common to refine this classification by further sorting divisors according to their vanishing sequences at one or more marked points $p_1, \cdots, p_n$, which amounts to recording not just $r(D)$ but all values $r(D - n p_i)$ for $n \geq 0$. 
More recently, the subject of \emph{Hurwitz--Brill--Noether theory} aims to study the categorization of divisors on curves that have a degree-$k$ map $C \to \PP^1$, or alternatively curves with a chosen degree-$k$ divisor $F$ of degree $k$ and $r(F) \geq 1$. In this context, one classifies divisors according not just to $r(D)$, but $r(D-nF)$ for all $n \in \ZZ$ (both positive and negative). Both variations develop a theme: divisors are classified according to the ranks of some collection of ``twists.'' 

The present paper develops the theme further: if $u,v$ are two marked points, then we may categorize divisors according to the ranks $r(D + au - bv)$, $a,b \in \ZZ$, of all twists at the marked points. This can be used to study Hurwitz--Brill--Noether theory if the divisor $F$ is a combination of $u$ and $v$. This is a function of two variables, but for certain divisors, which we will call \emph{submodular} (see Section \ref{sec:submodular}), it is conveniently encoded in a permutation. The curious sign choice, writing $+au-bv$, is made so that more generic divisors have fewer inversions in their transmission permutations, and to simplify the later discussion of the Demazure product. In particular, the ``most generic'' divisors of degree $g$ give the identity permutation, and these divisors do not change the transmission permutation of another divisor on another twice-marked graph when attached by vertex gluing.

\begin{defn}
\label{defn:tp}
The \emph{transmission permutation} of $D$ on $(\Gamma,v,w)$, if it exists, is the unique permutation $\tvwd: \ZZ \to \ZZ$ such that the following two equivalent equations hold for all $a,b \in \ZZ$.
\begin{eqnarray}
r(D + a v - bw) + 1 &=& \# \{n \geq b: \tvwd(n) \leq a \} \label{eq:rd}\\
r(K_\Gamma - D -a v + bw) +1&=& \# \{ n < b: \tvwd(n) > a\} \label{eq:rkd}
\end{eqnarray}
\end{defn}

\begin{eg}
The ``most generic'' transmission permutations are those with no inversions at all, i.e. increasing permutations. Indeed, suppose that $D$ is a degree $d$ divisor on a genus $g$ graph such that all ``twists'' $D' = D + av - bw$ are nonspecial (that is, either $|D'| = \emptyset$ or $|K_\Gamma-D'| = \emptyset$). Then $\tau^{v,w}_D(n) = n + g - d$ for all $n \in \ZZ$; we will refer to this increasing permutation as $\iota_{d-g}$. In particular, the identity permutation corresponds to the ``most generic possible'' divisors with $d=g$.
\end{eg}

\begin{eg}
Consider the simple transposition of $m$ and $m+1$; we denote this later by $\sigma^0_m$. This permutation has one inversion, and $\tau^{v,w}_D = \sigma^0_m$ if and only if $\deg D = g$, $r(D + mv - (m+1)w) = 0$, and all other twists $D' = D + av - bw$ are nonspecial. If $\Gamma$ is a cycle and $v-w$ is nontorsion, then there is a unique such divisor class $D$, given by a single point $w_m \in \Gamma$. These points form an equally spaced sequence, with $w_{-1} = v$ and $w_0 = w$ (cf. Figure \ref{fig:genus1}). If $v-w$ is torsion, we obtain instead an affine permutation $\sigma^k_m$, as defined below. See Section \ref{ssec:genus1} for a complete analysis for transmission permutations in genus $1$, from which the claims in this example follow.
\end{eg}

\begin{rem}
\label{rem:tauEquiv}
As stated above, Equations \eqref{eq:rd} and \eqref{eq:rkd} are equivalent. This can be seen as follows. Since $r(D + av - bw) \to -1$ as $b \to \infty$, Equation \eqref{eq:rd} is equivalent to the difference equation
$$r(D+av-bw) - r(D+av-(b+1)w) = \delta( \tvwd(b) \leq a) \mbox{ for all } a,b \in \ZZ.$$
By Riemann-Roch, this is equivalent to a dual difference equation
$$r(K_\Gamma - D-av+(b+1)w) - r(K_\Gamma-D-av+bw) = \delta( \tvwd(b) > a) \mbox{ for all } a,b \in \ZZ,$$
which similarly is equivalent to Equation \eqref{eq:rkd}. Also, these equations imply that the \emph{function} $\tvwd$ is necessarily a \emph{permutation}, since for any $b \ll 0$, Riemann-Roch implies
$$\# \{ n \geq b: \tvwd(n) = a\} = r(D+av-bw) - r(D+(a-1)v-bw) = 1.$$
\end{rem}

The transmission permutation has many pleasant properties. First, the ``expected codimensions'' from all variations of Brill--Noether theory described above are equal to the cardinality of some subset of the \emph{inversions} of $\tau^{u,v}_D$. Second, transmission permutations are extremely well-suited to vertex gluing. There is an associative product $\star$, called the \emph{Demazure product}, such that when a divisor $D$ on a vertex gluing is split across the two graphs as $D_1 + D_2$, then we have an identity (Theorem \ref{thm:chaining})
$$
\tau^{v_1,w_2}_D = \tau^{v_1, w_2}_{D_1} \star \tau^{v_2, w_2}_{D_2},
$$
provided that all divisors involved are submodular. This identity is the basis for all the analysis in this paper, and the specific definition of $\tau^{v,w}_D$ made above (including the sign choice) is engineered to keep it as naturally stated as possible.

%%%%%
%%%%%
\subsection{The groups $\ts{k}$}
\label{ssec:tsk}
We are interested in twice-marked graphs in which transmission permutations belong to a specific family of groups. Example \ref{eg:chainLoops} will show that in fact the group $\ts{k}$ is precisely the set of transmission permutations that occur on chains of $k$-torsion cycles.

\begin{defn}
\label{defn:tsk}
Let $\ts0$ denote the group of permutations $\alpha: \ZZ \to \ZZ$ with finitely many inversions. For $k \geq 2$, let $\ts{k}$ denote the group of permutations $\alpha: \ZZ \to \ZZ$ such that $\alpha(n+k) = \alpha(n) + k$ for all $n \in \ZZ$. The groups $\ts{k}$ for $k\geq2$ are called \emph{extended affine symmetric groups
\footnote{The (un-extended) \emph{affine symmetric group} is the subgroup $\widetilde{S}_k$ of $\ts{k}$ in which $\sum_{n=0}^{k-1} (\alpha(n) -n ) = 0$. This is equivalent to $\{n \geq 0: \alpha(n) < 0 \} = \{n < 0: \alpha(n) \geq 0\}$. Equations \eqref{eq:rd} and \eqref{eq:rkd} 
imply that, if such an $\alpha$ is $\tau^{v,w}_D$, then $r(D-v) - r(K_\Gamma-D+v) = 0$, and thus $\deg D - g = 0$ by Riemann-Roch. So the (unextended) affine permutations are those that can arise via divisors with degree equal to the genus.
}
.}
\end{defn}

If $(\Gamma,v,w)$ is twice-marked graph with $kv \sim kw$, then one can show that $\tvwd(n+k) = \tvwd(n) + k$ for all submodular divisors $D$ and $n \in \ZZ$; this is why the groups $\ts{k}$ arise naturally in our context.

\begin{defn}
\label{defn:invk}
An \emph{inversion} of a permutation $\alpha$ is a pair $(u,v) \in \ZZ^2$ such that $u<v$ and  $\alpha(u) > \alpha(v)$. The set of inversions of $\alpha$ is denoted $\Inv(\alpha)$.
If $\alpha \in \ts{k}$, call two inversions $(u,v),(u',v')$ $k$-equivalent if there is an integer $n$ such that $(u',v') = (u + nk, v+ nk)$. Denote by $\inv_k(\alpha)$ the number of $k$-equivalence classes of $\Inv(\alpha)$. More concretely, $\inv_0(\alpha)$ is the number of inversions, and for $k \geq 2$, $\inv_k(\alpha)$ is the number of inversions $(u,v)$ with $0 \leq v < k$.
\end{defn}

The number $\inv_k$ measures word length for a convenient system of generators.

\begin{defn}
Denote by $\iota_m$ the shift permutation $\iota_m(n) = n-m$. Let $\sigma^k_m \in \ts{k}$ denote the permutation exchanging $n$ and $n+1$ for all $n \equiv m \pmod{k}$, and fixing all other integers (when $k=0$, congruence means equality).
%When $k=0$, $\sigma^0_m$ is the simple transposition of $m$ and $m+1$.
We call the permutations $\sigma^k_m$ \emph{simple reflections.}
\end{defn}

Note that $\inv_k (\iota_m) = 0$, and $\inv_k (\sigma^k_m) = 1$. In fact, for all $\alpha \in \ts{k}$,
\begin{equation}
\label{eq:invas}
\inv_k( \alpha \sigma^k_m) = \inv_k(\alpha) + \begin{cases}
1 & \mbox{ if } \alpha(m) < \alpha(m+1)\\
-1 & \mbox{ if } \alpha(m) > \alpha(m+1).
\end{cases}
\end{equation}
Equation \eqref{eq:invas} shows that $\ts{k}$ is generated by $\iota_1$ and $\{\sigma^k_m: m \in \ZZ\}$; this could have been our definition of $\ts{k}$, and it shows more clearly why $\ts0$ belongs to the same family as $\ts{k}$ for $k \geq 2$.

\subsection{$k$-general transmission}

If $\tau$ is a transmission permutation on $(\Gamma, v, w)$ the number $\rho = g - \inv_k(\tau)$ plays the role of the ``Brill--Noether number,''and $\inv_k(\tau)$ is the ``expected codimension.'' This is reflected in the key definition of this paper:

\begin{defn}
\label{defn:kgt}
Let $k \geq 0$, with $k \neq 1$.
A genus-$g$ twice-marked graph $(\Gamma, v, w)$ has \emph{$k$-general transmission} if every divisor $D$ on $\Gamma$ is submodular, and satisfies $\tau^{v,w}_D \in \ts{k}$ and  $\inv_k \tau^{v,w}_D \leq g$.
\end{defn}

%%%%%
%%%%%
\subsection{Conventions}
\label{ssec:conventions}
Throughout this paper, the word \emph{graph} refers \emph{either} to a connected metric graph or a finite connected graph with no loop edges, unless stated otherwise . A \emph{twice-marked graph} $(\Gamma, v, w)$ is a graph with two chosen points (vertices, if $\Gamma$ is a finite graph). We denote the canonical divisor by $K_\Gamma$, linear equivalence by $D \sim E$, and the Baker-Norine rank by $r(D)$.
When a graph $\Gamma$ is clear from context, $g$ will denote its genus.

The symbol $\NN$ denotes the set of \emph{nonnegative} numbers. The symbol $\delta$ will always be used for an indicator function; e.g. $\delta(n \geq 5)$ is $1$ if $n \geq 5$ and $0$ otherwise.

%%%%%%%%%%
%%%%%%%%%%
\section{Submodular divisors}
\label{sec:submodular}

This section gives a convenient description of the divisors which possess transmission permutations as well as a handy formula for computing the permutations, and apply it to completely describe the situation in genus $1$.

\begin{defn}
\label{defn:submodular}
For any divisor on $(\Gamma,v,w)$,denote for convenience
\begin{equation*}
\Delta(D) = r(D) - r(D - v) - r(D-w) + r(D-v-w)
\end{equation*}
The divisor $D$ is \emph{submodular}\footnote{
The reader might object that this should be called ``supermodular'' given the direction of the inequality. The reason is that the function $(a,b) \mapsto r(D + av - bw)$, and \emph{that} function is submodular. See also Remark \ref{rem:whySubmod}.
}
with respect to $v,w$ if $\Delta(D') \geq 0$ for all twists $D' = D + av-bw$.
\end{defn}

\begin{rem}
\label{rem:basepoints}
Since $r(D') - r(D'-w)\in \{0,1\}$, submodularity is equivalent to the implication
$$\mbox{for all twists } D' = D + av -bw,\hspace{0.5cm} r(D'-w) = r(D') \hspace{0.25cm} \Rightarrow \hspace{0.25cm} r(D'-v-w) = r(D'-v).$$
This is always true for a divisor on an \emph{algebraic curve} $C$, since if every divisor in the complete linear series $|D'|$ contains $w$, then so does every divisor in $|D'-v|$ (assuming $v \neq w$).
\end{rem}

\begin{prop}
\label{prop:tauFormula}
The transmission permutation $\tvwd$ exists if and only if $D$ is submodular with respect to $v,w$. If $D$ is submodular, then $\tvwd$ is given by the formula
$$\tvwd(b) = \min \{a \in \ZZ: r(D+av - bw) > r(D + av - (b+1)w) \}.$$
Equivalently, $\tvwd(b)$ is the unique $a \in \ZZ$ for which $\Delta(D + av-bw) = 1$.
\end{prop}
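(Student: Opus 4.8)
The plan is to reduce everything to the behavior of a single auxiliary $\{0,1\}$-valued function. For each fixed $b$, set
$$\phi_b(a) := r(D + av - bw) - r\bigl(D + av - (b+1)w\bigr),$$
which records whether removing one copy of $w$ from the twist $D + av - bw$ drops the rank; since removing a point lowers the rank by either $0$ or $1$, we always have $\phi_b(a) \in \{0,1\}$. The first computation I would carry out is the bookkeeping identity
$$\Delta(D + av - bw) = \phi_b(a) - \phi_b(a-1),$$
obtained by expanding $\Delta(E) = r(E) - r(E-v) - r(E-w) + r(E-v-w)$ at $E = D + av - bw$ and matching the four terms. Because $\phi_b$ takes values in $\{0,1\}$, this already forces $\Delta(D + av - bw) \in \{-1,0,1\}$, and it translates submodularity ($\Delta \geq 0$ on all twists) into the clean statement that each $\phi_b$ is \emph{non-decreasing} in $a$.

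Next I would pin down the boundary behavior of $\phi_b$. For $a \ll 0$ the twist $D + av - bw$ has negative degree, so both ranks in $\phi_b(a)$ equal $-1$ and $\phi_b(a) = 0$; for $a \gg 0$ the degree exceeds $2g$, so both twists are nonspecial and Riemann--Roch gives $\phi_b(a) = 1$. Assuming $D$ submodular, $\phi_b$ is therefore a non-decreasing $\{0,1\}$-valued function that starts at $0$ and ends at $1$, hence jumps from $0$ to $1$ exactly once. I define $\tvwd(b)$ to be the location of this jump, which is precisely $\min\{a : \phi_b(a) = 1\} = \min\{a : r(D + av - bw) > r(D + av - (b+1)w)\}$, the asserted formula. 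By construction $\phi_b(a) = \delta(\tvwd(b) \leq a)$, which is exactly the difference equation recorded in Remark \ref{rem:tauEquiv}; that remark shows this difference equation is equivalent to Equation \eqref{eq:rd} and forces the resulting function $\tvwd$ to be a genuine permutation, establishing both the existence of the transmission permutation and the displayed formula.

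For the converse and the final ``equivalently'' clause I would run the identity in reverse. If $\tvwd$ exists, then the difference equation $\phi_b(a) = \delta(\tvwd(b) \leq a)$ holds, and substituting into the bookkeeping identity yields
$$\Delta(D + av - bw) = \delta(\tvwd(b) \leq a) - \delta(\tvwd(b) \leq a-1) = \delta(\tvwd(b) = a) \geq 0,$$
so $D$ is submodular; the same line shows $\Delta(D + av - bw) = 1$ if and only if $a = \tvwd(b)$, which is the alternative characterization. I expect the only step requiring genuine care is the boundary analysis guaranteeing that each $\phi_b$ actually attains both values $0$ and $1$, so that the jump exists and is unique; everything else is formal manipulation of the identity $\Delta(D + av - bw) = \phi_b(a) - \phi_b(a-1)$ together with the equivalence already supplied by Remark \ref{rem:tauEquiv}.
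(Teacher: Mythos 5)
Your proposal is correct and follows essentially the same route as the paper: your identity $\Delta(D+av-bw) = \phi_b(a) - \phi_b(a-1)$ and the monotonicity of $\phi_b$ are exactly the paper's Remark \ref{rem:basepoints} plus its ``inclusion-exclusion argument,'' spelled out in more detail, and both arguments close by invoking Remark \ref{rem:tauEquiv}. The extra care you take with the boundary behavior of $\phi_b$ is a welcome explicit justification of a step the paper leaves implicit.
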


\begin{proof}
Suppose that $\tvwd$ exists. Equation \eqref{eq:rd} and an inclusion-exclusion argument shows that $\Delta(D + av - bw) = \delta(\tvwd(b) = a) \geq 0$ for all $a,b \in \ZZ$, hence $D$ is submodular.

Conversely, suppose that $D$ is submodular with respect to $v,w$, and define $\tau : \ZZ \to \ZZ$ by 
$\tau(b) = \min \{a \in \ZZ: r(D+av - bw) > r(D + av - (b+1)w) \}.$
%$\tau(b) = \min \{a \in \ZZ: \svwd(a+1,b) > \svwd(a,b) \}.$
Remark \ref{rem:basepoints} implies that
\begin{equation}
\label{eq:rdDiff}
r(D+av - bw) = r(D + av - (b+1)w) + \delta( \tau(b) \leq a )
%\svwd(a,b) = \svwd(a,b+1) + \delta( \tau(b) < a )
\end{equation}
which implies $\tau = \tvwd$ by Remark \ref{rem:tauEquiv}.
\end{proof}

%%%%%
%%%%%
\subsection{The base case: genus $1$}
\label{ssec:genus1}

Let $(\Gamma, v, w)$ be a twice-marked graph of genus $1$, with $v \not\sim w$.

\begin{defn}
\label{defn:torsionOrder}
The torsion order of $(\Gamma,v,w)$ is the nonnegative integer $k$ such that $nv \sim nw$ if and only if $n \in k \ZZ$. By our assumption that $v \not\sim w$, $k \neq 1$.
\end{defn}

More concrelely, if $\Gamma$ is a loop (also known as a cycle), and the two edges from $v$ to $w$ have lengths $\ell_1, \ell_2$, then $k$ is the minimum postive integer such that $k \ell_1 \in (\ell_1 + \ell_2) \ZZ$, or $0$ if no such $m$ exists. If $\Gamma$ is not a loop, then it contains a loop as as a subgraph (the \emph{skeleton}), and one can define $\ell_1, \ell_2$ by contracting $v,w$ to the skeleton.

\begin{lemma}
\label{lem:genus1}
Suppose $(\Gamma, v, w)$ has torsion order $k$, and let $D$ be a degree $d$ divisor on $\Gamma$. Then $D$ is submodular, and
\begin{enumerate}
\item If there exists $m \in \ZZ$ such that $D \sim m w + (d-m) v$, then $\tau^{v,w}_D = \iota_{d-1} \sigma_{m-1}^{k}$.
\item If no such $m$ exists, then $\tau^{v,w}_D = \iota_{d-1}$.
\end{enumerate}
\end{lemma}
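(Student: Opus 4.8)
The plan is to reduce everything to the explicit rank function on a genus-$1$ graph and then read the permutation straight off the formula in Proposition \ref{prop:tauFormula}. Recall the standard description of ranks in genus $1$: a divisor $E$ with $\deg E \geq 1$ has $r(E) = \deg E - 1$; a divisor of degree $0$ has rank $0$ if $E \sim 0$ and $-1$ otherwise; and a divisor of negative degree has rank $-1$. Since Proposition \ref{prop:tauFormula} both asserts that $\tau^{v,w}_D$ exists exactly when $D$ is submodular and identifies $\tau^{v,w}_D(b)$ as the unique $a$ with $\Delta(D + av - bw) = 1$, the entire lemma becomes a computation of $\Delta$ on the twists $D' = D + av - bw$, whose degree is $d + a - b$.

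First I would establish submodularity and locate the special twists at the same time, by computing $\Delta(D')$ according to $e = \deg D'$. Using the rank function above, a short check gives $\Delta(D') = 0$ whenever $e \geq 3$ or $e \leq -1$, while for $e \in \{0,1,2\}$ one finds $\Delta(D') \in \{0,1\}$, with the value $1$ occurring exactly in the cases: $e = 0$ and $D' \sim 0$; $e = 1$ and $D' \not\sim v$ and $D' \not\sim w$ (here the hypothesis $v \not\sim w$ guarantees at most one of these two can fail); or $e = 2$ and $D' \sim v + w$. In particular $\Delta(D') \geq 0$ always, so $D$ is submodular, and for fixed $b$ the only candidates for $\tau^{v,w}_D(b)$ are the three values $a \in \{b-d,\,b-d+1,\,b-d+2\}$ that produce $e \in \{0,1,2\}$.

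Next I would translate each special-twist condition into a statement about $D$ and an index $m$. Unwinding the equivalences: the degree-$0$ twist (at $a = b-d$) is special iff $D \sim bw + (d-b)v$, i.e. $m = b$ is admissible; the degree-$2$ twist (at $a = b-d+2$) is special iff $D \sim (b+1)w + (d-b-1)v$, i.e. $m = b+1$ is admissible; and the degree-$1$ twist (at $a = b-d+1$) is special iff neither $m = b$ nor $m = b+1$ is admissible. The torsion hypothesis enters precisely here, since $bw + (d-b)v \sim b'w + (d-b')v$ iff $(b-b')(w-v)\sim 0$ iff $b \equiv b' \pmod k$; hence the admissible indices $m$ form a single residue class mod $k$ (a single integer when $k = 0$). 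If no index is admissible, then for every $b$ only the degree-$1$ twist is special, giving $\tau^{v,w}_D(b) = b - d + 1 = \iota_{d-1}(b)$, which is case (2). If some index $m$ is admissible, then for each $b$ exactly one of the three conditions holds, according to whether $b \equiv m$, $b \equiv m-1$, or neither, modulo $k$, yielding $\tau^{v,w}_D(b) = b-d$, $b-d+2$, or $b-d+1$ respectively; comparing these with the pointwise values of $\iota_{d-1}\sigma^k_{m-1}$ establishes case (1).

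I expect the only real friction to be bookkeeping rather than anything conceptual: keeping the degree shifts, the sign convention in $D + av - bw$, and the index offset (landing on $\sigma^k_{m-1}$ rather than $\sigma^k_m$) all mutually consistent, and confirming that admissibility of $m$ is genuinely the congruence condition modulo $k$ coming from the torsion order. Because each value $\tau^{v,w}_D(b)$ is produced directly from Proposition \ref{prop:tauFormula}, the fact that the answer is an honest permutation comes for free; nevertheless I would sanity-check it by observing that the final formulas manifestly agree with the permutations $\iota_{d-1}$ and $\iota_{d-1}\sigma^k_{m-1}$.
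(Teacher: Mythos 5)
Your proposal is correct and follows essentially the same route as the paper's proof: compute $\Delta(D'+av-bw)$ case-by-case from the genus-$1$ rank formula to get submodularity, read off $\tau^{v,w}_D(b)$ via Proposition \ref{prop:tauFormula}, and use the torsion order to identify the admissible indices $m$ as a single residue class mod $k$ (the paper's set $\Lambda$), then match the resulting pointwise formula with $\iota_{d-1}\sigma^k_{m-1}$. The only differences are cosmetic bookkeeping in how the degree cases are grouped and how the final formula is written.
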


See Figure \ref{fig:genus1} for an illustration of this Lemma, in the case $k=7$. A divisor of degree $1$ is linearly equivalent to a unique single point on the skeleton. There is an equally spaced sequence of points giving the transmission permutations $\sigma^k_m$, and any point not in this sequence has transmission permutation $\iota_0$. Divisors of degree other than $1$ are equivalent to a multiple of $v$ plus a single point on the skeleton, and the transmission permutation is as describe before, but shifted by $d-1$.

\newcommand{\sevencycle}{
\coordinate (v) at (-1,1.5);
\coordinate (w) at (1,1.5);
\coordinate (v06) at ({sin(4*360/7)},{-cos(4*360/7)});
\coordinate (v00) at ({sin(3*360/7)},{-cos(3*360/7)});
\coordinate (v01) at ({sin(2*360/7)},{-cos(2*360/7)});
\coordinate (v02) at ({sin(1*360/7)},{-cos(1*360/7)});
\coordinate (v03) at ({sin(0*360/7)},{-cos(0*360/7)});
\coordinate (v04) at ({sin(-1*360/7)},{-cos(-1*360/7)});
\coordinate (v05) at ({sin(-2*360/7)},{-cos(-2*360/7)});
\draw (v06) -- (v00) -- (v01) -- (v02) -- (v03) -- (v04) -- (v05) -- cycle;
\draw (v06) -- (v) node[left] {$v$};
\draw (v00) -- (w) node[right] {$w$};
}
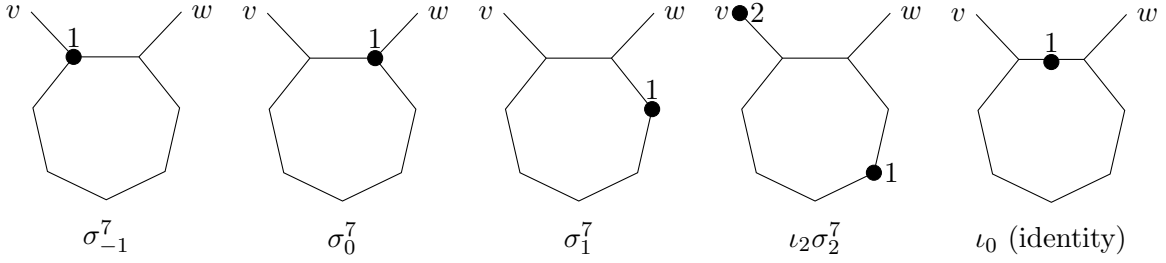
\begin{figure}
\begin{tikzpicture}
\sevencycle
\draw[fill] (v06) circle[radius=0.1] node[above] {$1$};
\draw (0,-1.5) node {$\sigma^7_{-1}$};
\end{tikzpicture}
\begin{tikzpicture}
\sevencycle
\draw[fill] (v00) circle[radius=0.1] node[above] {$1$};
\draw (0,-1.5) node {$\sigma^7_{0}$};
\end{tikzpicture}
\begin{tikzpicture}
\sevencycle
\draw[fill] (v01) circle[radius=0.1] node[above] {$1$};
\draw (0,-1.5) node {$\sigma^7_{1}$};
\end{tikzpicture}
\begin{tikzpicture}
\sevencycle
\draw[fill] (v) circle[radius=0.1] node[right] {$2$};
\draw[fill] (v02) circle[radius=0.1] node[right] {$1$};
\draw (0,-1.5) node {$\iota_2 \sigma^7_{2} $};
\end{tikzpicture}
\begin{tikzpicture}
\sevencycle
\draw[fill] (0,{-cos(3.5*360/6)}) circle[radius=0.1] node[above] {$1$};
\draw (0,-1.5) node {$\iota_0$ (identity)};
\end{tikzpicture}
\caption{Examples of transmission permutations on a genus $1$ twice-marked graph, with torsion order $7$.}
\label{fig:genus1}
\end{figure}

\begin{proof}
For any $D' = D + av - bw$, Riemann-Roch implies $r(D') = -1+\max \{0, \deg D' \} + \delta(D' \sim 0)$, and therefore, in the notation of Definition \ref{defn:submodular}, we deduce the following equations for $\Delta(D')$.
%$$
%\Delta(D') = \begin{cases}
%\delta(D' \sim 0) & \mbox{ if } \deg D' \leq 0,\\
%\delta(D' \not\sim v,w) & \mbox{ if } \deg D' = 1,\\
%\delta(D' \sim v+w) & \mbox{ if } \deg D' \geq 2.
%\end{cases}
%$$

\begin{align*}
\mbox{ if $\deg D' \leq 0$, then } \Delta(D') %&= r(D') - r(D'-v) - r(D'-w) + r(D'-v-w)\\
&= (-1+\delta(D' \sim 0)) - (-1) - (-1) + (-1)\\
&= \delta(D' \sim 0)\\
\mbox{ if $\deg D'  = 1$, then } \Delta(D') &= 0 - (-1 + \delta(D' -v \sim 0)) + (-1+\delta(D'-w \sim 0)) + (-1)\\
&= 1 - \delta(D' \sim v) - \delta(D' \sim w) \\
&= \delta(D' \not\sim v,w)\\
\mbox{ if $\deg D' \geq 2$, then } \Delta(D') &= 1 - 0 - 0 + (-1 + \delta(D'-v-w \sim 0))\\
&= \delta(D' \sim v + w)
\end{align*}

So $D$ is submodular, and by Proposition \ref{prop:tauFormula}
$$
\tvwd(b) = \begin{cases}
b-d & \mbox{ if } D + (b-d) v - bw \sim 0,\\
b-d+2 & \mbox{ if } D + (b-d+2)v - bw \sim v + w,\\
b-d+1 & \mbox{ if } D + (b-d+1)v - bw \not\sim v,w.
\end{cases}
$$
Define $\Lambda = \{m \in \ZZ: D \sim m w + (d-m)v \}$. This is either empty, or an arithmetic progression $m + k \ZZ$ for some $m$. The formula above may be conveniently rewritten
$$\tvwd(b) = b-d+1 - \delta(b \in \Lambda) + \delta(b+1 \in \Lambda).$$
This is precisely $\iota_{d-1} \sigma^k_{m-1} (b)$ if $\Lambda = m + k \ZZ$, and $\iota_{d-1}$ otherwise, which establishes the lemma.
\end{proof}

Lemma \ref{lem:genus1} shows that $(\Gamma,v,w)$ has $k$-general transmission. In fact, it also shows that $(\Gamma,v,w)$ does \emph{not} have $k'$-general transmission for any $k' \neq k$, because for example $\tau^{v,w}_w = \sigma^k_0$; if $k \nmid k'$ then $\sigma^k_0 \not\in \ts{k'}$, while if $k \mid k'$ then $\inv_{k'} \sigma^k_0 = k'/k \geq 2$.

\begin{cor}
\label{cor:genus1GT}
$(\Gamma,v,w)$ has $k$-general transmission if and only if it has torsion order $k$.
\end{cor}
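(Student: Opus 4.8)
The plan is to deduce the corollary directly from the explicit computation of transmission permutations in Lemma \ref{lem:genus1}, since that lemma has already done all the hard work of relating divisors to permutations; what remains is to read off membership in $\ts{k}$ and the inversion count, and to verify that no competing value of $k$ can work. I would prove the two implications separately.

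For the ``if'' direction, suppose $(\Gamma,v,w)$ has torsion order $k$. Lemma \ref{lem:genus1} already asserts that every divisor is submodular, so it remains to check the two remaining clauses of Definition \ref{defn:kgt} for each $\tau^{v,w}_D$, which the lemma computes to be either $\iota_{d-1}$ or $\iota_{d-1}\sigma^k_{m-1}$. Both the shift $\iota_{d-1}$ and the simple reflection $\sigma^k_{m-1}$ satisfy the defining relation $\alpha(n+k) = \alpha(n)+k$ of $\ts{k}$ (for $k=0$ one instead checks finiteness of inversions directly), and $\ts{k}$ is a group, so each $\tau^{v,w}_D$ lies in $\ts{k}$. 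For the inversion bound I would observe that left multiplication by a shift does not change the inversion set at all, since $\Inv(\iota_m\alpha) = \Inv(\alpha)$ (subtracting a constant from all values preserves the order relation); hence $\inv_k(\iota_{d-1}\sigma^k_{m-1}) = \inv_k(\sigma^k_{m-1}) = 1$ and $\inv_k(\iota_{d-1}) = 0$. In either case $\inv_k \tau^{v,w}_D \leq 1 = g$, giving $k$-general transmission.

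For the ``only if'' direction I would argue by contraposition: assuming torsion order $k$, I show $(\Gamma,v,w)$ cannot have $k'$-general transmission for any $k' \neq k$. The idea is to exhibit a single divisor whose permutation already obstructs $k'$-generality, and the divisor $D = w$ does the job, since Lemma \ref{lem:genus1} gives $\tau^{v,w}_w = \sigma^k_0$. If $k'$-general transmission held, we would need both $\sigma^k_0 \in \ts{k'}$ and $\inv_{k'}\sigma^k_0 \leq 1$. The inversions of $\sigma^k_0$ are exactly the adjacent pairs $(n,n+1)$ with $n \equiv 0 \pmod k$, whose support $k\ZZ$ is invariant under translation by $k'$ precisely when $k \mid k'$; thus $k \nmid k'$ already forces $\sigma^k_0 \notin \ts{k'}$. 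When $k \mid k'$ with $k' \neq k$, writing $k' = ck$ with $c \geq 2$, a direct count using the normalization $0 \leq v < k'$ in Definition \ref{defn:invk} gives $\inv_{k'}\sigma^k_0 = c = k'/k \geq 2 > g$, again a contradiction. The degenerate cases are handled the same way: if $k' = 0$ and $k \geq 2$, then $\sigma^k_0$ has infinitely many inversions and so escapes $\ts0$; if $k = 0$ and $k' \geq 2$, then the lone swap of $\sigma^0_0$ has support $\{0,1\}$, which is not periodic modulo $k'$, so $\sigma^0_0 \notin \ts{k'}$. Hence $k' = k$.

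The only genuinely delicate step is the bookkeeping in the converse: one must count the $k'$-equivalence classes of inversions of $\sigma^k_0$ correctly and keep the degenerate cases $k=0$ and $k'=0$ straight. Everything else is a routine unwinding of the definitions, because the substantive content---that every genus-$1$ divisor is submodular and that its transmission permutation is a shifted simple reflection---is already contained in Lemma \ref{lem:genus1}.
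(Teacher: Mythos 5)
Your proof is correct and follows essentially the same route as the paper: the forward direction is read off from Lemma \ref{lem:genus1} (every transmission permutation is $\iota_{d-1}$ or $\iota_{d-1}\sigma^k_{m-1}$, hence lies in $\ts{k}$ with $\inv_k \leq 1 = g$), and the converse uses the same witness $\tau^{v,w}_w = \sigma^k_0$ with the same dichotomy, $k \nmid k'$ forcing $\sigma^k_0 \notin \ts{k'}$ and $k \mid k'$ forcing $\inv_{k'}\sigma^k_0 = k'/k \geq 2$. Your treatment is just a more explicit write-up of the paper's argument, with the degenerate cases $k=0$ and $k'=0$ spelled out.
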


%%%%%
%%%%%
\section{Demazure products}
\label{sec:demazureGeneral}

The heart of this paper is the observation that transmission permutations (when they exist) behave well under vertex gluing: we will prove in Theorem \ref{thm:chaining} that they compose according to a certain associative operation which we refer to as the \emph{Demazure product.} This operation plays the role in this paper that displacement of partitions plays in \cite{pflChains, cpjComponents, cpjMethods}.

\begin{defn}
For any permutation $\tau: \ZZ \to \ZZ$ and integers $a,b$ define
$$\st(a,b) = \# \{ n \geq b:\ n < a\}.$$
Call $\tau$ \emph{almost-sign-preserving} if $\st(0,0)$ and  $\sti(0,0)$ are finite. This is equivalent to saying that both $\st(a,b), \sti(a,b)$ are finite for \emph{all} $a,b \in \ZZ$, i.e. $\st$ is a function $\ZZ^2 \to \NN$.
\end{defn}

\begin{rem}
If a divisor $D$ on $(\Gamma, v,w)$ is submodular with $\tau = \tvwd$, then in this notation, 
$$r(D + av-bw) +1 = s_{\tau}(a+1,b) \mbox{ and } r(K_\Gamma-D-av+bw) +1 = s_{ \tau^{-1}}(b,a+1).$$
The ``$+1$''s are due to the strict inequality $n<a$ in the definition of $\st$.
\end{rem}

\begin{defn}
For any two functions $s_1, s_2: \ZZ^2 \to \NN$, define a function $s_1 \star s_2$ by
$$s_1 \star s_2(a,b) = \min_{\ell \in \ZZ} s_1(a,\ell) + s_2(\ell,b).$$
For two almost-sign-preserving permutations $\alpha, \beta$, the \emph{Demazure product} $\alpha \star \beta$, if it exists, is the unique almost-sign-preserving permutation satisfying
$\sab = \sa \star \sbe.$
\end{defn}

Every Coxeter group possesses a product called either the \emph{$0$-Hecke product } or \emph{Demazure product}, which is obtained by setting $q=0$ in the Hecke algebra, as defined for example in \cite[\S 6]{bjornerBrenti}. We will prove below that, when restricted to a symmetric group or affine symmetric group (which are Coxeter groups), our definition of $\star$ recovers this usual Demazure product.

This formula for $s_1 \star s_2$ may be viewed as tropical matrix multiplication of two infinite matrices $s_1, s_2$, with rows and columns indexed by $\ZZ$. 
The fact that the usual Demazure product on the symmetric group can be characterized by tropical matrix multiplication is discussed in \cite{cpRR} in slightly different notation and without full details, but as far as I can tell it was not previously known. A forthcoming paper will prove that in fact $\alpha \star \beta$ exists for \emph{all} almost-sign-preserving $\alpha,\beta$. To simplify the present paper, we will only prove its existence on $\ts{k}$; this is done in Section \ref{ssec:demazure}.

\begin{lemma}
\label{lem:whereMin}
Suppose $\alpha, \beta$ are almost-sign-preserving and $a,b \in \ZZ$. The minimum value $\sa \star \sbe (a,b)$ of $ \{ \sa(a,\ell) + \sbe(\ell,b): \ell \in \ZZ\}$ is obtained for some $\ell$ such that
$\beta^{-1}(\ell-1) < b \leq \beta^{-1}(\ell)$.
\end{lemma}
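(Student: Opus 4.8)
The plan is to fix $a,b$ and study the single–variable function $f(\ell) = \sa(a,\ell) + \sbe(\ell, b)$, whose minimum over $\ell \in \ZZ$ is by definition $\sa \star \sbe(a,b)$. Everything hinges on the increments of $f$. From $\sa(a,\ell) = \#\{n \geq \ell : \alpha(n) < a\}$, raising $\ell$ by one drops only the term $n=\ell$, so $\sa(a,\ell+1)-\sa(a,\ell) = -\delta(\alpha(\ell)<a)$; and from $\sbe(\ell,b) = \#\{n \geq b : \beta(n) < \ell\}$, passing from $\beta(n)<\ell$ to $\beta(n)<\ell+1$ gains exactly the unique index $n=\beta^{-1}(\ell)$, counted precisely when $n \geq b$, so $\sbe(\ell+1,b)-\sbe(\ell,b) = \delta(\beta^{-1}(\ell)\geq b)$. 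Hence
$$f(\ell+1)-f(\ell) = \delta(\beta^{-1}(\ell)\geq b) - \delta(\alpha(\ell)<a) \in \{-1,0,1\}.$$
The payoff of this computation is that the target condition $\beta^{-1}(\ell-1) < b \leq \beta^{-1}(\ell)$ is literally ``$\delta(\beta^{-1}(\ell)\geq b)=1$ and $\delta(\beta^{-1}(\ell-1)\geq b)=0$,'' i.e.\ the nondecreasing step function $\ell \mapsto \sbe(\ell,b)$ jumps up exactly at $\ell$.

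Next I would invoke almost-sign-preservation to show $f(\ell)\to+\infty$ as $\ell\to\pm\infty$, so that the finite value $\sa\star\sbe(a,b)$ is attained on a nonempty finite set of minimizers $M$. As $\ell\to+\infty$, finiteness of $\sa(a,0)$ forces $\sa(a,\ell)=0$ eventually, while $\sbe(\ell,b)\to\infty$ because $\beta(\{n\geq b\})$ is unbounded above: were it bounded by $C$, then $\{v>C\}\subseteq\beta(\{n<b\})$ would make $s_{\beta^{-1}}(b,C+1)$ infinite, contradicting almost-sign-preservation. As $\ell\to-\infty$ it is even simpler, since $\sbe(\ell,b)\geq 0$ and $\sa(a,\ell)\to\infty$ (the set $\{n:\alpha(n)<a\}$ is unbounded below, else $\sa(a,0)$ would be infinite).

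The crux is choosing the right minimizer. Write $S=\{\ell:\beta^{-1}(\ell)\geq b\}$, so the goal is a minimizer with $\ell\in S$ and $\ell-1\notin S$. First, the largest minimizer $\ell_1=\max M$ satisfies $f(\ell_1+1)>f(\ell_1)$, forcing the increment at $\ell_1$ to equal $+1$, which by the formula above forces $\ell_1\in S$; thus $M\cap S\neq\varnothing$. Now set $\ell_0=\min(M\cap S)$. If $\ell_0-1$ also lay in $S$, then the increment formula at $\ell_0-1$ combined with $f(\ell_0-1)\geq f(\ell_0)$ (minimality of $\ell_0$) would force $\delta(\alpha(\ell_0-1)<a)=1$ and hence $f(\ell_0-1)=f(\ell_0)$, placing $\ell_0-1\in M\cap S$ and contradicting minimality. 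Therefore $\ell_0\in S$ while $\ell_0-1\notin S$, which is exactly $\beta^{-1}(\ell_0-1)<b\leq\beta^{-1}(\ell_0)$, and $\ell_0$ attains the minimum.

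I expect the main obstacle to be not the combinatorial endgame — which is a short walk along the minimizing set once the increment formula is in hand — but rather pinning down the boundary behaviour of $f$ cleanly from almost-sign-preservation, since that is what legitimizes speaking of $\max M$ and $\min(M\cap S)$ at all. The one genuinely non-obvious idea is the choice $\ell_0=\min(M\cap S)$: neither the global smallest nor the global largest minimizer need satisfy both halves of the condition, whereas the smallest minimizer lying in $S$ does.
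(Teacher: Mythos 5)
Your proof is correct and takes essentially the same route as the paper's: both hinge on the unit increments of $\ell \mapsto \sa(a,\ell)+\sbe(\ell,b)$, observe that the largest minimizer $\ell_1$ must satisfy $\beta^{-1}(\ell_1)\geq b$, and then walk leftward through minimizers to reach an $\ell$ with $\beta^{-1}(\ell-1) < b \leq \beta^{-1}(\ell)$. Your treatment of the boundary behaviour (showing the minimum is attained on a finite set) is in fact spelled out more carefully than in the paper, which simply asserts that the minimizing set is bounded above.
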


\begin{proof}
Denote $L = \{ \ell \in \ZZ: \sa \star \sbe(a,b) = \sa(a,\ell) + \sbe(\ell,b) \}$. This set is bounded above; choose $\ell_0 \in L$ such that $\ell_0+1 \not\in L$. Since $\sa(a,\ell_0+1) \leq \sa(a,\ell_0)$, we have $\sbe(\ell_0+1,b) > \sbe(\ell_0,b)$, i.e. $\beta^{-1}(\ell_0) \geq b$. Now, let $\ell$ be the minimum integer such that all of $\beta^{-1}(\ell), \beta^{-1}(\ell+1), \cdots, \beta^{-1}(\ell_0)$ are at least $b$. Then $\sbe(\ell,b) < \cdots < \sbe(\ell_0,b)$, or equivalently $s(\ell,b) = s(\ell_0,b) - (\ell_0-\ell)$. 
Since $\ell(a,\ell) \leq \ell(a,\ell_0) + (\ell_0-\ell)$, we have $\sa(a,\ell) + \sbe(\ell,b) \leq \sa(a,\ell_0) + \sbe(\ell_0,b)$ and therefore $\ell \in L$ as well. By construction, $\beta^{-1}(\ell-1) < b \leq \beta^{-1}(\ell)$.
\end{proof}

\begin{lemma}
\label{lem:starIota}
For all almost-sign-preserving $\alpha$ and $m \in \ZZ$, $\alpha \star \iota_m$ exists and is $\alpha \iota_m$.
\end{lemma}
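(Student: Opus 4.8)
The plan is to reduce the claim to the defining identity $s_{\alpha\star\iota_m}=\sa\star\siom$ and then compare $s$-functions, exploiting that an almost-sign-preserving permutation is uniquely recovered from its $s$-function (indeed $s_\tau(a,b)-s_\tau(a,b+1)=\delta(\tau(b)<a)$, so $\tau(b)$ is the threshold in $a$ where this indicator jumps from $0$ to $1$). I would begin with two direct computations from $s_\tau(a,b)=\#\{n\ge b:\tau(n)<a\}$. For the shift, $\iota_m(n)=n-m$ gives
$$\siom(\ell,b)=\#\{n\ge b:\ n-m<\ell\}=\max(0,\ \ell+m-b).$$
For the composite, $(\alpha\iota_m)(n)=\alpha(n-m)$, so after re-indexing $n\mapsto n+m$,
$$s_{\alpha\iota_m}(a,b)=\#\{n\ge b:\ \alpha(n-m)<a\}=\sa(a,\ b-m).$$

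Next I would check that $\alpha\iota_m$ is a legitimate candidate for the product, i.e.\ that it is almost-sign-preserving. The second display already shows $s_{\alpha\iota_m}(a,b)=\sa(a,b-m)<\infty$ for all $a,b$, since $\alpha$ is ASP; and since $(\alpha\iota_m)^{-1}=\iota_{-m}\alpha^{-1}$ satisfies $(\iota_{-m}\alpha^{-1})(n)=\alpha^{-1}(n)+m$, the same kind of re-indexing yields $s_{(\alpha\iota_m)^{-1}}(a,b)=s_{\alpha^{-1}}(a-m,\ b)<\infty$. Both being finite, $\alpha\iota_m$ is almost-sign-preserving.

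The core step is to evaluate $\sa\star\siom(a,b)=\min_{\ell}\big[\sa(a,\ell)+\siom(\ell,b)\big]$ and match it to $\sa(a,b-m)$. Here I would invoke Lemma \ref{lem:whereMin} with $\beta=\iota_m$: since $\iota_m^{-1}=\iota_{-m}$ and $\iota_{-m}(\ell)=\ell+m$, the lemma's constraint $\beta^{-1}(\ell-1)<b\le\beta^{-1}(\ell)$ becomes $\ell+m-1<b\le\ell+m$, which for integers forces $\ell=b-m$. Hence the minimum is attained at $\ell=b-m$, where $\siom(b-m,b)=\max(0,0)=0$, giving
$$\sa\star\siom(a,b)=\sa(a,b-m)+0=\sa(a,b-m)=s_{\alpha\iota_m}(a,b).$$
Comparing with the computation above, $\sa\star\siom=s_{\alpha\iota_m}$, and since $\alpha\iota_m$ is ASP, uniqueness of the ASP permutation realizing a given $s$-function lets me conclude that $\alpha\star\iota_m$ exists and equals $\alpha\iota_m$.

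No step here is deep; the only real care is the index bookkeeping and the off-by-one in the definition of $s_\tau$, together with the ASP verification that makes the product well-defined. Should one prefer to avoid Lemma \ref{lem:whereMin}, the minimum can be located directly by monotonicity: for $\ell\le b-m$ the summand equals $\sa(a,\ell)$, which is non-increasing in $\ell$, while for $\ell>b-m$ it equals $\sa(a,\ell)+(\ell+m-b)$, which is non-decreasing because $\sa(a,\cdot)$ drops by at most $1$ per unit step whereas the linear term rises by exactly $1$. Either way the minimum sits at $\ell=b-m$, so I expect this to be the entirety of the argument.
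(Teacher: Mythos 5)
Your proof is correct and follows essentially the same route as the paper: both invoke Lemma \ref{lem:whereMin} with $\beta=\iota_m$ to pin the minimizing index to $\ell=b-m$, then observe $\siom(b-m,b)=0$ and $\sa(a,b-m)=s_{\alpha\iota_m}(a,b)$. The extra checks you include (that $\alpha\iota_m$ is almost-sign-preserving, and the alternative monotonicity argument) are fine but not needed beyond what the paper records.
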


\begin{proof}
Given $a,b \in \ZZ$, the only $\ell$ for which $\iota_m^{-1}(\ell-1) < b \leq \iota^{-1}_m(\ell)$ is $\ell = b-m$. Therefore Lemma \ref{lem:whereMin} implies
$\sa \star \siom(a,b) = \sa(a,b-m) + \siom(b-m,b) = \sa(a,b-m) = s_{\alpha \iota_m}(a,b)$.
\end{proof}

%%%%%
%%%%%
\subsection{Demazure products in $\ts{k}$}
\label{ssec:demazure}

Fixed $k \geq 0$ with $k \neq 1$. We prove in this subsection

\begin{thm}
\label{thm:gamma}
If $\alpha, \beta \in \ts{k}$, then the Demazure product $\alpha \star \beta$ exists, lies in $\ts{k}$, and satisfies $\inv_k \alpha \star \beta \leq \inv_k \alpha + \inv_k \beta$. It is characterized on generators by the equations
\begin{equation}
\label{eq:starSigma}
\alpha \star \iota_m = \alpha \iota_m
\hspace{0.5cm} \mbox{and} \hspace{0.5cm}
\alpha \star \sigma^k_m = \begin{cases}
\alpha \sigma^k_m & \mbox{ if } \alpha(m) < \alpha(m+1),\\
\alpha & \mbox{ if } \alpha(m) > \alpha(m+1).
\end{cases}
\end{equation}
\end{thm}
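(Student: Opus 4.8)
The plan is to reduce everything to the two generator formulas and then bootstrap via associativity. Note first that every element of $\ts{k}$ is almost-sign-preserving (for $b \ll 0 \ll b'$ the values $\alpha(b),\alpha(b')$ are far negative/positive by the periodicity $\alpha(n+k)=\alpha(n)+k$, resp.\ by finiteness of inversions when $k=0$), so ``$\alpha \star \beta$ exists in $\ts{k}$'' amounts to exhibiting some $\gamma \in \ts{k}$ with $s_\gamma = \sa \star \sbe$. The generator equation $\alpha \star \iota_m = \alpha\iota_m$ is exactly Lemma \ref{lem:starIota}; I will prove the companion formula for $\alpha \star \sigma^k_m$ by a direct computation, and these two facts, together with associativity of $\star$, then give the theorem by induction on $\inv_k \beta$. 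Associativity is cheap: on functions $\ZZ^2 \to \NN$ the operation $\star$ is min-plus (tropical) matrix multiplication,
$$(s_1 \star s_2)\star s_3(a,c) = \min_{\ell,\ell'} \big( s_1(a,\ell) + s_2(\ell,\ell') + s_3(\ell',c)\big) = s_1 \star (s_2\star s_3)(a,c),$$
the minima being finite and attained when the factors are $s$-functions of almost-sign-preserving permutations, as Lemma \ref{lem:whereMin} guarantees.

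The heart of the proof, and the step I expect to be the main obstacle, is the simple-reflection formula. Writing $\sigma = \sigma^k_m$, the first move is to compute $s_\sigma(\ell,b)$ explicitly: comparing $\{n \ge b : \sigma(n) < \ell\}$ with $\{n \ge b: n < \ell\}$ and using that $\sigma$ only swaps $n \leftrightarrow n+1$ for $n \equiv m \bmod k$, one finds the clean identity
$$s_\sigma(\ell,b) = s_{\iota_0}(\ell,b) + \delta(\ell = b)\,\delta\!\big(b \equiv m+1 \bmod k\big), \qquad s_{\iota_0}(\ell,b) = \max(0,\ell-b).$$
Feeding this into $\sa \star s_\sigma(a,b) = \min_\ell \sa(a,\ell) + s_\sigma(\ell,b)$, the minimization without the correction term is $\sa \star s_{\iota_0} = \sa$ (Lemma \ref{lem:starIota} with $m=0$), attained at $\ell=b$, and the correction only penalizes $\ell=b$. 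Using that $\ell\mapsto\sa(a,\ell)$ is non-increasing and that $\sa(a,\ell)+(\ell-b)$ is non-decreasing for $\ell\ge b$, the minimum over $\ell\neq b$ is realized at $\ell\in\{b-1,b+1\}$, giving $\sa\star s_\sigma(a,b)=\sa(a,b)$ when $b\not\equiv m+1$, and
$$\sa \star s_\sigma(a,b) = \sa(a,b) + \min\big(\delta(\alpha(b-1) < a),\, 1 - \delta(\alpha(b) < a)\big) \qquad (b \equiv m+1 \bmod k).$$

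The last step is to recognize this right-hand side. The $\min$ of two indicators is their product $\delta(\alpha(b-1) < a \le \alpha(b))$, while a parallel (easier) computation of $s_{\alpha\sigma}$ gives $s_{\alpha\sigma}(a,b)-\sa(a,b) = \delta(\alpha(b-1)<a)-\delta(\alpha(b)<a)$ on the class $b\equiv m+1$ and $0$ off it. Since $\alpha\in\ts{k}$, the sign of $\alpha(b-1)-\alpha(b)$ on this class is constant, equal to that of $\alpha(m)-\alpha(m+1)$. In the \emph{ascent} case $\alpha(m)<\alpha(m+1)$ both expressions equal $\delta(\alpha(b-1)<a\le\alpha(b))$, so $\alpha\star\sigma=\alpha\sigma$; in the \emph{descent} case $\alpha(m)>\alpha(m+1)$ the $\min$ vanishes identically (one cannot have $\alpha(b-1)<a\le\alpha(b)$), so $\alpha\star\sigma=\alpha$. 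This is precisely \eqref{eq:starSigma} for $\sigma^k_m$, and the result lies in $\ts{k}$ in both cases.

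With the two generator formulas in hand, I finish by induction on $r=\inv_k\beta$. If $r=0$ then $\beta$ is an increasing element of $\ts{k}$, hence a shift $\iota_m$, and Lemma \ref{lem:starIota} applies. If $r\ge 1$, choose a descent $m$ (one exists, else $\beta$ is increasing); then $\beta':=\beta\sigma^k_m$ satisfies $\inv_k\beta'=r-1$ and $\beta'(m)<\beta'(m+1)$ by \eqref{eq:invas}, so the ascent case gives $s_\beta=s_{\beta'}\star s_{\sigma^k_m}$. Associativity yields $\sa\star\sbe=(\sa\star s_{\beta'})\star s_{\sigma^k_m}$; by induction $\sa\star s_{\beta'}=s_{\gamma'}$ with $\gamma'\in\ts{k}$ and $\inv_k\gamma'\le\inv_k\alpha+(r-1)$, and one more application of the simple-reflection formula gives $s_{\gamma'}\star s_{\sigma^k_m}=s_\gamma$ with $\gamma\in\ts{k}$ and $\inv_k\gamma\le\inv_k\gamma'+1\le\inv_k\alpha+r$. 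This simultaneously establishes existence, membership in $\ts{k}$, the bound $\inv_k(\alpha\star\beta)\le\inv_k\alpha+\inv_k\beta$, and the generator characterization \eqref{eq:starSigma}.
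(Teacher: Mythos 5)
Your proposal is correct and follows essentially the same route as the paper: establish the two generator formulas $\alpha\star\iota_m=\alpha\iota_m$ and the ascent/descent rule for $\alpha\star\sigma^k_m$, then induct on $\inv_k\beta$ using associativity of the tropical product. The only differences are cosmetic — you derive the reduction to $\min\{\sa(a,b-1),\sa(a,b+1)+1\}$ by writing $s_{\sigma^k_m}=s_{\iota_0}+{}$a $\delta$-correction rather than by invoking Lemma \ref{lem:whereMin} directly, and you make explicit the associativity step that the paper uses silently.
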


The second part of Equation \eqref{eq:starSigma}, along with associativity, can be taken as the \emph{definition} of the standard Demazure product on (affine) symmetric groups, and (with appropriate modifications) Coxeter groups more generally; see e.g. \cite[\S 6.1]{bjornerBrenti}. So Theorem \ref{thm:gamma} confirms that our definition of $\star$, via tropical matrix multiplication, extends the usual one on (affine) symmetric groups.

\begin{lemma}
\label{lem:sasskm}
If $\alpha$ is almost-sign-preserving, then for any simple reflection $\sigma^k_m$,
\begin{equation}
\label{eq:sasskm}
\sa \star \sskm (a,b) = \begin{cases}
\sa(a,b) & \mbox{ if } b \not\equiv m+1 \pmod{k},\\
\min \{ \sa(a,b-1), \sa(a,b+1) +1 \} & \mbox{ if } b \equiv m+1 \pmod{k}.
\end{cases}
\end{equation}
\end{lemma}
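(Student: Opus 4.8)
The plan is to apply Lemma \ref{lem:whereMin} with $\beta = \sigma^k_m$. Since $\sigma^k_m$ is an involution, $\beta^{-1} = \sigma^k_m$, so the minimum defining $\sa \star \sskm(a,b)$ is attained at some \emph{crossing} index $\ell$, meaning $\sigma^k_m(\ell-1) < b \le \sigma^k_m(\ell)$. The whole proof then reduces to (i) identifying the crossing indices and (ii) evaluating $\sskm(\ell,b)$ at each of them.

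For step (i), I would write out the set $S = \{\ell : \sigma^k_m(\ell) \ge b\}$ using the piecewise description of $\sigma^k_m$: an index $\ell \equiv m \pmod k$ lies in $S$ when $\ell \ge b-1$, an index $\ell \equiv m+1 \pmod k$ when $\ell \ge b+1$, and every other $\ell$ when $\ell \ge b$. The crossings are exactly the $\ell \in S$ with $\ell - 1 \notin S$. A short residue check then yields the dichotomy in the statement: if $b \not\equiv m+1 \pmod k$, the unique crossing is $\ell = b$; if $b \equiv m+1 \pmod k$, then $b \notin S$ and the crossings are exactly $\ell = b-1$ and $\ell = b+1$.

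For step (ii), I would evaluate $\sskm(\ell,b) = \#\{n \ge b : \sigma^k_m(n) < \ell\}$ at the crossings using the elementary bound $\sigma^k_m(n) \ge n-1$, with equality only when $n \equiv m+1 \pmod k$. This gives $\sskm(b,b) = 0$ and $\sskm(b-1,b) = 0$ at once, while $\sskm(b+1,b) = 1$ because (using $b \equiv m+1$) the only $n \ge b$ with $\sigma^k_m(n) \le b$ is $n = b$. Substituting, and since Lemma \ref{lem:whereMin} guarantees the minimum is attained at a crossing, the product equals $\sa(a,b)$ in the first case and $\min\{\sa(a,b-1),\ \sa(a,b+1)+1\}$ in the second, as claimed.

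The main obstacle is step (i): pinning down the crossing set exactly, and in particular verifying \emph{uniqueness} of the crossing $\ell = b$ in the generic case, so that the minimum is genuinely $\sa(a,b)$ and not something smaller. The residue bookkeeping must be carried out uniformly across the regimes $k=0$ (where congruence is equality and $\sigma^0_m$ moves only $m$ and $m+1$), $k=2$ (where $\sigma^2_m$ moves every integer, so $b-1, b, b+1$ all belong to swapped pairs), and $k \ge 3$; confirming that the proposed crossings survive and that no spurious crossings appear in each regime is the fiddly part. Everything else follows from the monotonicity of $\ell \mapsto \sa(a,\ell)$ and the involution property noted at the outset.
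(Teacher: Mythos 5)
Your proposal is correct and follows essentially the same route as the paper: apply Lemma \ref{lem:whereMin} with $\beta=\sigma^k_m$ (using that $\sigma^k_m$ is an involution), determine that the crossing set is $\{b\}$ when $b\not\equiv m+1\pmod k$ and $\{b-1,b+1\}$ when $b\equiv m+1\pmod k$, and then evaluate $\sskm(b,b)=0$, $\sskm(b-1,b)=0$, $\sskm(b+1,b)=1$. The paper's proof is just a terser version of the same computation, so no further comparison is needed.
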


\begin{proof}
First suppose $b \not\equiv m+1 \pmod{k}$. Then $\skm(\ell) \geq b$ if and only if $\ell \geq b$, so $\skm(\ell-1) < b \leq \skm(\ell)$ if and only if $\ell = b$. Since $\sskm(b,b) = 0$,
Lemma \ref{lem:whereMin} implies that $\sa \star \sskm(a,b) = \sa(a,b)$.

Now suppose $b \equiv m+1 \pmod{k}$. Then $\skm(\ell) \geq b$ if and only if $\ell = b-1$ or $\ell \geq b+1$. Therefore $\skm(\ell-1) < b \leq \skm(\ell)$ if and only if $\ell \in \{b-1, b+1\}$. By definition of $\sskm$, $\sskm(b-1,b) = 0$ and $\sskm(b+1,b) = 1$. So Lemma \ref{lem:whereMin} implies the desired equation.
\end{proof}

\begin{lemma}
\label{lem:starSigma}
For $\alpha \in \ts{k}$ and any $\sigma^k_m$,
$
\sa \star \sskm = \begin{cases}
s_{\alpha \skm} & \mbox{ if } \alpha(m) < \alpha(m+1),\\
\sa & \mbox{ if } \alpha(m) > \alpha(m+1).
\end{cases}
$
\end{lemma}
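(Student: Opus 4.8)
The plan is to combine the explicit formula from Lemma \ref{lem:sasskm} with a direct computation of $s_{\alpha \skm}$, reducing the whole statement to a two-bit case check. First I would compute $s_{\alpha \skm}(a,b)$ by comparing it termwise with $\sa(a,b)$. Since $\alpha \skm$ agrees with $\alpha$ except that it swaps the values $\alpha(n)$ and $\alpha(n+1)$ for every $n \equiv m \pmod{k}$, the two counts $\#\{n \geq b : (\alpha\skm)(n) < a\}$ and $\#\{n \geq b : \alpha(n) < a\}$ can differ only when such a swapped pair $\{n,n+1\}$ straddles the cutoff $b$, i.e. $n = b-1 \equiv m \pmod{k}$. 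Hence $s_{\alpha\skm}(a,b) = \sa(a,b)$ whenever $b \not\equiv m+1 \pmod{k}$, which already matches both branches of the claimed formula on that residue class (for the first branch one also uses that $\sa \star \sskm(a,b) = \sa(a,b)$ there, by Lemma \ref{lem:sasskm}). When $b \equiv m+1 \pmod{k}$, the straddling pair $\{b-1,b\}$ contributes only through position $b$, giving $s_{\alpha\skm}(a,b) = \sa(a,b) - \delta(\alpha(b) < a) + \delta(\alpha(b-1) < a)$.

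Next I would rewrite the $b \equiv m+1$ case of Lemma \ref{lem:sasskm} in the same terms. Using $\sa(a,b-1) = \sa(a,b) + \delta(\alpha(b-1)<a)$ and $\sa(a,b+1) = \sa(a,b) - \delta(\alpha(b)<a)$, that formula becomes $\sa \star \sskm(a,b) = \sa(a,b) + \min\{p,\,1-q\}$, where $p = \delta(\alpha(b-1)<a)$ and $q = \delta(\alpha(b)<a)$. Everything then reduces to comparing $\min\{p,1-q\}$ with $p-q$ (the quantity appearing in $s_{\alpha\skm}$) and with $0$ (the quantity appearing in $\sa$), as a function of the two bits $p,q \in \{0,1\}$.

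The step that selects the correct branch is a periodicity reduction: since $\alpha \in \ts{k}$ and $b-1 \equiv m$, $b \equiv m+1 \pmod{k}$, we have $\alpha(b-1) - \alpha(b) = \alpha(m) - \alpha(m+1)$, so the hypothesis comparing $\alpha(m)$ and $\alpha(m+1)$ controls the ordering of $\alpha(b-1)$ and $\alpha(b)$ and thereby constrains the admissible pairs $(p,q)$. If $\alpha(m) < \alpha(m+1)$ then $\alpha(b-1) < \alpha(b)$ rules out $(p,q) = (0,1)$, and on the three surviving pairs one checks $\min\{p,1-q\} = p-q$, giving $\sa \star \sskm = s_{\alpha\skm}$. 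If $\alpha(m) > \alpha(m+1)$ then $\alpha(b-1) > \alpha(b)$ rules out $(p,q) = (1,0)$, and on the three surviving pairs $\min\{p,1-q\} = 0$, giving $\sa \star \sskm = \sa$. I expect the only genuine obstacle to be bookkeeping: tracking the strict-versus-nonstrict inequality built into $\st$ and the off-by-one in locating the straddling pair, so that the $+1$ in Lemma \ref{lem:sasskm} lands on the correct side of the minimum. Once $p$ and $q$ are in place, the case analysis is immediate.
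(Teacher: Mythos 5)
Your proposal is correct and follows essentially the same route as the paper: both compare $s_{\alpha\sigma^k_m}$ termwise with $s_\alpha$ (the straddling-pair observation), rewrite the minimum from Lemma \ref{lem:sasskm} as $s_\alpha(a,b)$ plus an indicator, and use $k$-periodicity to reduce the comparison of $\alpha(b-1),\alpha(b)$ to that of $\alpha(m),\alpha(m+1)$. Your explicit two-bit case check on $(p,q)$ is just an unpacked version of the paper's identity $\min\{\delta(\alpha(b-1)<a),\,\delta(\alpha(b)\geq a)\}=\delta\bigl(\alpha(b-1)<a\leq\alpha(b)\bigr)$, so the arguments coincide.
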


\begin{proof}
First, observe that $s_{\alpha \skm}(a,b) = \# \{n: \skm(n) \geq b \mbox{ and } \alpha(n) < a\},$ and $\{n: \skm(n) \geq b\}$ is either $\{n: n \geq b\}$ or $\{n: n \geq b\} \backslash \{b\} \cup \{b-1\}$, depending on whether $b \equiv m+1 \pmod{k}$. Hence
\begin{align}\begin{split}
\label{eq:saskm}
s_{\alpha \sigma^k_m}(a,b) = \sa(a,b) &+ \delta(b \equiv m+1 \pmod{k} \mbox{ and } \alpha(b-1) < a \leq \alpha(b))\\
&- \delta( b \equiv m+1 \pmod{k} \mbox{ and } \alpha(b) < a \leq \alpha(b-1).
\end{split}\end{align}

Observe that
$\sa(a,b-1) = \sa(a,b) + \delta(\alpha(b-1) < a)$ and $\sa(a,b+1) +1 = \sa(a,b) + \delta(\alpha(b) \geq a)$. This implies
$ \min \{ \sa(a,b-1), \sa(a,b+1) + 1 \} = \sa(a,b) + \delta\Big( \alpha(b-1) < a \leq \alpha(b)\Big),$
and therefore
\begin{equation}
\label{eq:sasskm2}
\sa \star \sskm (a,b) = \sa(a,b) + \delta\Big(  b \equiv m+1 \pmod{k} \mbox{ and } \alpha(b-1) < a \leq \alpha(b) \Big).
\end{equation}
If $\alpha(m) > \alpha(m+1)$, then $\alpha \in \ts{k}$ implies that $\alpha(b-1) > \alpha(b)$ for all $b \equiv m+1 \pmod{k}$, so Equation \eqref{eq:sasskm2} shows that $\sa \star \sskm = \sa$ in that case. On the other hand, if $\alpha(m) < \alpha(m+1)$ then $\alpha(b-1) < \alpha(b)$ for all such $b$, and Equation \eqref{eq:saskm} implies that $\sa \star \sskm = s_{\alpha \skm}$ in this case.
\end{proof}

\begin{proof}[Proof of Theorem \ref{thm:gamma}]
The existence of $\alpha \star \iota_m$ and $\alpha \star \sigma^k_m$, and the formulas in Equation \eqref{eq:starSigma}, follow from Lemmas \ref{lem:starIota} and \ref{lem:starSigma}. It remains to show that for all $\alpha, \beta \in \ts{k}$, $\alpha \star \beta$ exists and $\inv_k \alpha \star \beta \leq \inv_k \alpha + \inv_k \beta$. We prove this by induction on $\inv_k \beta$. If $\inv_k \beta = 0$, then $\beta = \iota_m$ for some $m \in \ZZ$, and we have already considered this case. If $\inv_k \beta > 0$, then by Equation \eqref{eq:invas} there exists a simple reflection $\sigma^k_m$ such that $\beta' = \beta \sigma^k_m$ satisfies $\inv_k \beta' = \inv_k \beta -1$ and $s_\beta = s_{\beta'} \star \sskm$. By inductive hypothesis, $\alpha \star \beta'$ exists, so $\sa \star \sbe = \sa \star s_{\beta'} \star \sskm = s_{\alpha \star \beta'} \star \sskm$. By Lemma \ref{lem:starSigma}, this is either $s_{(\alpha \star \beta') \skm}$ or $s_{\alpha \star \beta'}$; either way $\alpha \star \beta$ exists, and $\inv_k(\alpha \star \beta) \leq \inv_k(\alpha \star \beta') + 1 \leq \inv_k(\alpha) + \inv_k(\beta)$.
\end{proof}

%%%%%
%%%%%
\subsection{Vertex gluing}
\label{ssec:chainability}

We now relate the Demazure product to transmission permutations, and prove Theorem \ref{thm:main}.
The following notation will be useful.

\begin{defn}
The \emph{transmission function of $D$ with respect to $v,w$} is $s^{v,w}_D: \ZZ^2 \to \NN$ defined by 
$$s^{v,w}_D(a,b) = r(D + (a-1)v - bw)+1.$$
\end{defn}

\begin{rem}
\label{rem:whySubmod}
The divisor $D$ is submodular if and only if $\svwd(a,b) - \svwd(a+1,b) - \svwd(a,b+1) + \svwd(a+1,b+1) \leq 0$ for all $a,b \in \ZZ$, which is why we use that word. If $\svwd$ is submodular, then 
$$\svwd = s_{\tvwd}.$$
The seemingly unnecessary ``$-1$'' in the definition of $\svwd$ is included to keep this statement clean.
\end{rem}

Fix two twice-marked graphs $(\Gamma_1, v_1, w_1)$ and $(\Gamma_2, v_2, w_2)$, and let $(\Gamma, v_1, w_2)$ be the vertex gluing. In what follows, we will be working with divisors on $\Gamma_1, \Gamma_2,$ and $\Gamma$, so we will use subscripts to clarify the graph in question, e.g. we write $r_{\Gamma_1}(D_1)$ to indicate the rank of $D_1$ when it is viewed as a divisor on $D_1$.
To declutter the prose, we follow a convention: when we write ``$D = D_1 + D_2$,'' we mean implicitly that $D_i$ is supported on $\Gamma_i$. 
We will deduce Theorem \ref{thm:main} from the following.

\begin{thm}
\label{thm:chaining}
For any divisor $D = D_1 + D_2$ on the vertex gluing $(\Gamma,v_1,w_2)$ described above,
$$s^{v_1, w_2}_D = s^{v_1, w_1}_{D_1} \star s^{v_2, w_2}_{D_2}.$$
 In particluar, if $D_1, D_2$ and $D$ are submodular, then 
$$\tau^{v_1,w_2}_D = \tau^{v_1,w_1}_{D_1} \star \tau^{v_2, w_2}_{D_2}.$$
In both equations, terms involving $D_i$ ($i \in 1,2$) are understood to refer to it \emph{as a divisor on $\Gamma_i$}.
\end{thm}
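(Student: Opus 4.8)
The plan is to reduce the identity to a statement about effectivity of divisors across the cut vertex $p := w_1 = v_2$, and then convert that effectivity statement into the claimed $\min$-convolution by a worst-case analysis. Throughout fix $a,b \in \ZZ$ and write $G_1 = D_1 + (a-1)v_1$ on $\Gamma_1$, $G_2 = D_2 - b w_2$ on $\Gamma_2$, and $E = D + (a-1)v_1 - bw_2$ on $\Gamma$, so that $E = G_1 + G_2$. Unwinding the definitions of the transmission functions and of $\star$, the first identity at $(a,b)$ is equivalent to
\[
r_\Gamma(E) \;=\; 1 + \min_{\ell \in \ZZ}\Big( r_{\Gamma_1}(G_1 - \ell p) + r_{\Gamma_2}(G_2 + (\ell-1)p)\Big).
\]
The second identity then follows formally: if $D_1,D_2,D$ are submodular then $s^{v_i,w_i}_{D_i} = s_{\tau^{v_i,w_i}_{D_i}}$ and $s^{v_1,w_2}_D = s_{\tau^{v_1,w_2}_D}$ by Remark \ref{rem:whySubmod}, so the $s$-identity is exactly the defining relation of the Demazure product of the associated permutations.

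The key geometric input I would establish first is a \emph{cut-vertex effectivity lemma}: for any divisor $Y = Y_1 + Y_2$ on $\Gamma$ (with $Y_i$ supported on $\Gamma_i$), one has $|Y|_\Gamma \neq \emptyset$ if and only if there is an integer $t$ with $|Y_1 + t p|_{\Gamma_1} \neq \emptyset$ and $|Y_2 - t p|_{\Gamma_2} \neq \emptyset$. The ``if'' direction is easy: a tropical rational function on $\Gamma_i$ extends to $\Gamma$ by its constant value $\phi_i(p)$ on the other side, and this extension has the same divisor, so linear equivalence on $\Gamma_i$ lifts to $\Gamma$; one then writes $Y = (Y_1 + tp) + (Y_2 - tp)$ and makes each summand effective on its own side. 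For the ``only if'' direction I would take an effective $Y' \sim_\Gamma Y$ realized by a rational function $\phi$ and restrict $\phi$ to each $\Gamma_i$; since $\Gamma_1$ and $\Gamma_2$ meet only at $p$, the divisor of $\phi|_{\Gamma_i}$ agrees with that of $\phi$ away from $p$ and differs at $p$ only by the net slope of $\phi$ crossing the cut, a single integer $t$ measuring the transfer of chips, yielding $Y_1 + tp \sim_{\Gamma_1} Y'_1$ and $Y_2 - tp \sim_{\Gamma_2} Y'_2$ with both right sides effective. (Equivalently: Dhar's burning algorithm started at $p$ propagates independently into $\Gamma_1$ and $\Gamma_2$, so the $p$-reduced representative restricts to $p$-reduced divisors on each side.) This lemma and its proof are uniform for finite and metric graphs.

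With the lemma in hand, fix an effective $F = F_1 + F_2$ on $\Gamma$ and define the \emph{$p$-depth} $\lambda_i(F_i) = \max\{ j : |G_i - j p - F_i|_{\Gamma_i} \neq \emptyset\}$; applying the lemma to $Y = E - F = (G_1 - F_1)+(G_2-F_2)$ shows $|E-F|_\Gamma \neq \emptyset$ if and only if $\lambda_1(F_1) + \lambda_2(F_2) \geq 0$. I would then prove the two inequalities separately. For ``$\leq$'': given $\ell$, set $r_1 = r_{\Gamma_1}(G_1 - \ell p)$ and $r_2 = r_{\Gamma_2}(G_2 + (\ell-1)p)$; by definition of rank there are effective $F_1$ of degree $r_1+1$ with $|G_1 - \ell p - F_1| = \emptyset$ and $F_2$ of degree $r_2+1$ with $|G_2 + (\ell-1)p - F_2| = \emptyset$, forcing $\lambda_1(F_1) \leq \ell - 1$ and $\lambda_2(F_2) \leq -\ell$, hence $\lambda_1 + \lambda_2 \leq -1 < 0$. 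Thus $F = F_1 + F_2$, of degree $r_1 + r_2 + 2$, satisfies $|E-F|_\Gamma=\emptyset$, giving $r_\Gamma(E) \leq r_1 + r_2 + 1$; minimizing over $\ell$ yields one inequality. For ``$\geq$'', write $M$ for the minimum on the right; setting $\Lambda_i(m) = \min_{\deg F_i = m}\lambda_i(F_i)$ one checks $\Lambda_1(m) = \max\{j : r_{\Gamma_1}(G_1 - jp) \geq m\}$ and dually for $\Lambda_2$, and since $\lambda_i(F_i) \geq \Lambda_i(\deg F_i)$ it suffices to show $\Lambda_1(m_1) + \Lambda_2(m_2) \geq 0$ for every split $m_1 + m_2 = M+1$. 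If some split violated this, then taking $\ell = \min\{s : r_{\Gamma_2}(G_2 + sp) \geq m_2\}$ would give $r_{\Gamma_1}(G_1 - \ell p) \leq m_1 - 1$ and $r_{\Gamma_2}(G_2 + (\ell-1)p) \leq m_2 - 1$, whence $r_{\Gamma_1}(G_1 - \ell p) + r_{\Gamma_2}(G_2 + (\ell-1)p) \leq M-1$, contradicting the minimality of $M$.

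I expect the main obstacle to be the cut-vertex effectivity lemma — specifically, making the ``only if'' slope-bookkeeping at $p$ fully rigorous (isolating the single integer $t$ by which chips cross the cut) and confirming that the argument runs uniformly for finite and metric graphs. By contrast, the passage from effectivity to ranks and the final duality between the $\lambda_i$ (or $\Lambda_i$) and the rank functions are elementary once the $p$-depths are in play; the only delicate point there is the off-by-one between the indices $\ell$ and $\ell-1$, which is exactly the bookkeeping that the ``$-1$''s built into the definitions of $s^{v,w}_D$ and of $\star$ are designed to absorb.
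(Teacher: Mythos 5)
Your proposal is correct and follows essentially the same route as the paper: reduce the $s$-identity to the rank formula $r_\Gamma(E) = 1 + \min_\ell\bigl(r_{\Gamma_1}(G_1 - \ell p) + r_{\Gamma_2}(G_2 + (\ell-1)p)\bigr)$ (the paper's Proposition \ref{prop:glueRank}), and derive that from the fact that effectivity on the vertex gluing splits across the cut vertex up to a single transfer integer (the paper's Lemmas \ref{lem:glueSim} and \ref{lemma:effectiveGlue}). Your $p$-depth bookkeeping with $\lambda_i$ and $\Lambda_i$ is a minor repackaging of the paper's argument via the statements $S(u)$ and the function $f(u)$ in Lemma \ref{lem:glueHelper} and Proposition \ref{prop:glueRank}, and the cut-vertex step you flag as the main obstacle is exactly the (standard) content of Lemma \ref{lem:glueSim}.
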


\begin{lemma}
\label{lem:glueSim}
Let $D = D_1 + D_2$ and $E = E_1 + E_2$ be two divisors on $\Gamma$, with $\deg D_i = \deg E_i$ for $i=1,2$. Then $D \sim_\Gamma E$ if and only if both $D_1 \sim_{\Gamma_1} E_1$ and $D_2 \sim_{\Gamma_2} E_2$.
\end{lemma}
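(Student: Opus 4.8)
The plan is to argue directly with rational functions, using the standard description of linear equivalence: on a graph (metric or finite) one has $D' \sim D''$ if and only if $D'' - D' = \operatorname{div}(f)$ for a rational function $f$ (a piecewise-linear function with integer slopes in the metric case, an integer-valued function on vertices in the finite case), where $\operatorname{div}(f)$ records at each point the negative sum of outgoing slopes, respectively the Laplacian. Write $p$ for the gluing point, so that $p = w_1 = v_2$ when viewed inside $\Gamma$. The structural observation that drives everything is additivity at $p$: a rational function $f$ on $\Gamma$ is the same data as a pair $(f_1, f_2)$ of rational functions on $\Gamma_1, \Gamma_2$ with $f_1(p) = f_2(p)$, its order at any $x \neq p$ equals the order of the relevant $f_i$, and at the gluing point
$$\ord_p(f) = \ord_{w_1}(f_1) + \ord_{v_2}(f_2),$$
because the outgoing directions at $p$ in $\Gamma$ are exactly those at $w_1$ in $\Gamma_1$ together with those at $v_2$ in $\Gamma_2$.

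For the easy direction ($\Leftarrow$) I would take rational functions $f_1, f_2$ with $\operatorname{div}(f_i) = E_i - D_i$ on $\Gamma_i$; adding a constant to $f_2$, which does not change its divisor, arranges $f_1(p) = f_2(p)$, so the pair glues to a rational function $f$ on $\Gamma$. Comparing divisors point by point—away from $p$ each side reproduces $E_i - D_i$, and at $p$ the additivity formula gives $(E_1 - D_1)(w_1) + (E_2 - D_2)(v_2)$—yields $\operatorname{div}(f) = E - D$, hence $D \sim_\Gamma E$.

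The forward direction ($\Rightarrow$) carries the only real content, and this is where the degree hypothesis enters. Given $f$ on $\Gamma$ with $\operatorname{div}(f) = E - D$, I restrict to $f_1, f_2$. At every point other than $p$ the divisor $\operatorname{div}(f_i)$ already agrees with $E_i - D_i$, so I only need to control the coefficient at $p$; write $\ord_{w_1}(f_1) = \alpha$. The obstacle is that the additivity formula pins down only the \emph{sum} $\alpha + \ord_{v_2}(f_2)$, not $\alpha$ itself. To split it, I would use that $\operatorname{div}(f_1)$ is principal on $\Gamma_1$, hence of degree $0$: summing its already-known coefficients over $\Gamma_1$ gives
$$0 = \big(\deg(E_1 - D_1) - (E_1 - D_1)(w_1)\big) + \alpha,$$
and since $\deg(E_1 - D_1) = \deg E_1 - \deg D_1 = 0$ by hypothesis, this forces $\alpha = (E_1 - D_1)(w_1)$. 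Therefore $\operatorname{div}(f_1) = E_1 - D_1$, and symmetrically $\operatorname{div}(f_2) = E_2 - D_2$, giving $D_1 \sim_{\Gamma_1} E_1$ and $D_2 \sim_{\Gamma_2} E_2$.

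Thus the main—and essentially only—difficulty is distributing the order of the gluing function across the two sides at $p$, and the degree balancing $\deg D_i = \deg E_i$ is precisely the ingredient that makes this distribution forced; without it the statement fails. I would note that the finite-graph case is identical, with ``rational function'' replaced by an integer function on vertices and $\ord_p$ by the Laplacian, whose additivity at the cut vertex $p$ holds for the same reason.
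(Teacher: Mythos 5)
Your proof is correct and follows the same route as the paper, which simply states as an observation that a divisor on $\Gamma$ is principal if and only if it is a sum of principal divisors on $\Gamma_1$ and $\Gamma_2$; your argument via gluing rational functions at the cut point and using degree zero to pin down the order at $p$ is exactly the detailed justification of that observation, together with the same use of the hypothesis $\deg D_i = \deg E_i$ to identify the resulting principal pieces with $E_i - D_i$.
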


\begin{proof}
This follows from the observations that a divisor on $\Gamma$ is principal if and only if it is the sum of a principal divisor on $\Gamma_1$ and a principal divisor on $\Gamma_2$.
%, which in turn can be deduced by comparing the divisor of a rational function $f: \Gamma \to \RR$ to those of its restrictions to $\Gamma_1, \Gamma_2$.
\end{proof}

\begin{lemma}
\label{lemma:effectiveGlue}
A divisor $D = D_1+D_2$ on $\Gamma$ has $\rg(D) \geq 0$ if and only if there exists $\ell \in \ZZ$ such that both $\rga(D_1-\ell w_1) \geq 0$ and $\rgb(D_2 + \ell v_2) \geq 0$.
\end{lemma}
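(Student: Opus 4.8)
The plan is to translate the statement about ranks $\geq 0$ into the existence of effective divisors in the appropriate linear equivalence classes, and then apply Lemma \ref{lem:glueSim} to decompose the equivalence on $\Gamma$ into equivalences on $\Gamma_1$ and $\Gamma_2$. Recall that $r_\Gamma(D) \geq 0$ means precisely that $D \sim_\Gamma E$ for some effective divisor $E$ on $\Gamma$. Since every point of $\Gamma$ lies on exactly one of $\Gamma_1$ or $\Gamma_2$ (the glued point $w_1 = v_2$ belonging to both), any effective $E$ can be written $E = E_1 + E_2$ with $E_i$ effective and supported on $\Gamma_i$; conversely such a sum is effective on $\Gamma$. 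The key point is that the degrees $\deg E_i$ are \emph{not} pinned down in advance — chips can sit on either side of the cut vertex — and this freedom is exactly what the parameter $\ell$ will encode.

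First I would prove the ``if'' direction, which is the easier one. Suppose $\ell \in \ZZ$ is such that $r_{\Gamma_1}(D_1 - \ell w_1) \geq 0$ and $r_{\Gamma_2}(D_2 + \ell v_2) \geq 0$. Then there are effective divisors $E_1 \sim_{\Gamma_1} D_1 - \ell w_1$ and $E_2 \sim_{\Gamma_2} D_2 + \ell v_2$. Viewing $w_1 = v_2$ as the common glued point, the divisor $E_1 + E_2$ is effective on $\Gamma$, and I claim $E_1 + E_2 \sim_\Gamma D$. Indeed $E_1 + E_2 - D = (E_1 - D_1 + \ell w_1) + (E_2 - D_2 - \ell v_2) - \ell w_1 + \ell v_2$; since $w_1 = v_2$ in $\Gamma$ the last two terms cancel, and each parenthesized term is principal on the respective subgraph, hence principal on $\Gamma$. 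Thus $r_\Gamma(D) \geq 0$.

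For the ``only if'' direction, suppose $r_\Gamma(D) \geq 0$, so $D \sim_\Gamma E$ for some effective $E = E_1 + E_2$ with $E_i$ effective on $\Gamma_i$. Set $\ell := \deg E_1 - \deg D_1$, so that $\deg E_1 = \deg(D_1 - \ell w_1)$ and, since total degree is preserved, $\deg E_2 = \deg(D_2 + \ell v_2)$ as well. Now I would like to invoke Lemma \ref{lem:glueSim} with the two decompositions $D = D_1 + D_2$ and $E = E_1 + E_2$ — but the degrees of the pieces need not match ($\deg D_1 \neq \deg E_1$ in general), so I first rewrite $D$ using the glued point: because $w_1 = v_2$ in $\Gamma$, we have $D = (D_1 - \ell w_1) + (D_2 + \ell v_2)$ as a decomposition of $D$ into a divisor supported on $\Gamma_1$ plus one supported on $\Gamma_2$, now with matching degrees $\deg(D_1 - \ell w_1) = \deg E_1$ and $\deg(D_2 + \ell v_2) = \deg E_2$. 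Applying Lemma \ref{lem:glueSim} to this decomposition of $D$ against $E = E_1 + E_2$ yields $D_1 - \ell w_1 \sim_{\Gamma_1} E_1$ and $D_2 + \ell v_2 \sim_{\Gamma_2} E_2$; since $E_1, E_2$ are effective this gives $r_{\Gamma_1}(D_1 - \ell w_1) \geq 0$ and $r_{\Gamma_2}(D_2 + \ell v_2) \geq 0$, as required.

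The main obstacle is purely bookkeeping rather than conceptual: one must be careful that the decomposition fed into Lemma \ref{lem:glueSim} has matching degrees on each side, which forces the specific choice $\ell = \deg E_1 - \deg D_1$ and the rewriting of $D$ that shifts $\ell$ chips across the glued vertex. The essential content is simply that the glued point is shared, so moving $\ell$ chips from one side to the other does not change the divisor on $\Gamma$, while it does shift which subgraph's linear system must contain the effective representative.
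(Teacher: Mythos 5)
Your proof takes essentially the same route as the paper's: both directions come down to Lemma \ref{lem:glueSim} after shifting $\ell$ chips across the glued vertex so that the two decompositions have matching degrees piece by piece (the paper compresses this into a single sentence). The one flaw is a sign slip in the ``only if'' direction: with your choice $\ell = \deg E_1 - \deg D_1$ one gets $\deg(D_1 - \ell w_1) = 2\deg D_1 - \deg E_1$, which is not $\deg E_1$ in general; you need $\ell = \deg D_1 - \deg E_1$ (as in the paper) for the degree bookkeeping, and hence the application of Lemma \ref{lem:glueSim}, to be valid. With that correction the argument is complete and matches the paper's.
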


\begin{proof}
If $E = E_1 + E_2$ is effective of the same degree as $D$, and $\ell = \deg D_1 - \deg E_1$, then Lemma \ref{lem:glueSim} shows that $D \sim_\Gamma E$  if and only if both $D_1  - \ell w_1 \sim_{\Gamma_1} E_1$ and $D_2 + \ell v_2 \sim_{\Gamma_2} E_2$.
\end{proof}

\begin{lemma}
\label{lem:glueHelper}
Let $D = D_1 +D_2$ be a divisor on $\Gamma$. For any integer $r$, $r_\Gamma(D) \geq r$ if and only if for all integers $u \in \{0,1,\cdots, r\}$, there exists $\ell \in \ZZ$ such that $r_{\Gamma_1}(D_1 -\ell w_1) \geq u$ and $r_{\Gamma_2}(D_2 + \ell v_2) \geq r-u$.
\end{lemma}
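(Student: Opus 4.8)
The plan is to unwind the Baker--Norine rank on $\Gamma$ into statements about $\Gamma_1$ and $\Gamma_2$ using Lemma \ref{lemma:effectiveGlue}, and then to resolve a quantifier-order issue using monotonicity of rank under adding or removing chips. Recall that $\rg(D) \geq r$ means precisely that $|D - E| \neq \emptyset$ for every effective divisor $E$ of degree $r$ on $\Gamma$. Every such $E$ decomposes as $E = E_1 + E_2$ with $E_i$ supported on $\Gamma_i$ and $\deg E_1 = u \in \{0, \ldots, r\}$ (chips at the gluing point may be assigned to either side without changing $D-E$ as a divisor on $\Gamma$). Applying Lemma \ref{lemma:effectiveGlue} to $D - E = (D_1 - E_1) + (D_2 - E_2)$, I find that $\rg(D) \geq r$ is equivalent to the statement
\[
(\ast)\qquad \forall\, u \in \{0,\dots,r\},\ \forall\, E_1, E_2,\ \exists\, \ell:\ \rga(D_1 - E_1 - \ell w_1) \geq 0 \ \text{ and }\ \rgb(D_2 - E_2 + \ell v_2) \geq 0,
\]
where $E_1, E_2$ range over effective divisors of degrees $u, r-u$. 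Meanwhile, since $\rga(D_1 - \ell w_1) \geq u$ says exactly that $|D_1 - \ell w_1 - E_1| \neq \emptyset$ for every effective $E_1$ of degree $u$ (and similarly on $\Gamma_2$), the desired conclusion is precisely $(\ast)$ with the quantifiers reordered as $\forall u\,\exists \ell\,\forall E_1, E_2$. So the whole lemma reduces to interchanging $\exists \ell$ with $\forall E_1, E_2$.

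The direction $\forall u\,\exists\ell\,\forall E_1,E_2 \Rightarrow (\ast)$ is immediate: given $u$, the single $\ell$ supplied by the hypothesis works for all $E_1, E_2$ simultaneously. For the reverse (and harder) direction, fix $u$ and introduce the two threshold integers $\ell_1^\ast = \max\{\ell : \rga(D_1 - \ell w_1) \geq u\}$ and $\ell_2^\ast = \min\{\ell : \rgb(D_2 + \ell v_2) \geq r-u\}$. These are finite because $\rga(D_1 - \ell w_1)$ is nonincreasing in $\ell$, tending to $+\infty$ as $\ell \to -\infty$ and equal to $-1$ for $\ell \gg 0$, while $\rgb(D_2 + \ell v_2)$ is nondecreasing with the opposite behavior. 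The conclusion for this $u$ asks for an $\ell$ with $\rga(D_1-\ell w_1)\geq u$ and $\rgb(D_2 + \ell v_2)\geq r-u$ simultaneously, i.e. for an integer $\ell$ with $\ell_2^\ast \leq \ell \leq \ell_1^\ast$; so it suffices to prove $\ell_2^\ast \leq \ell_1^\ast$.

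The main obstacle is exactly this decoupling, and I expect to handle it by contradiction using extremal witnessing divisors together with the monotonicity fact that $|F'|=\emptyset$ whenever $|F|=\emptyset$ and $F'$ is obtained from $F$ by removing chips (equivalently, rank is weakly decreasing under subtracting effective divisors). Suppose $\ell_1^\ast < \ell_2^\ast$. From $\rga(D_1 - (\ell_1^\ast+1)w_1) < u$ I obtain an effective $E_1$ of degree $u$ with $|D_1 - (\ell_1^\ast+1)w_1 - E_1| = \emptyset$; by monotonicity $\rga(D_1 - E_1 - \ell w_1) < 0$ for all $\ell \geq \ell_1^\ast + 1$. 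Symmetrically, from $\rgb(D_2 + (\ell_2^\ast-1)v_2) < r-u$ I obtain an effective $E_2$ of degree $r-u$ with $\rgb(D_2 - E_2 + \ell v_2) < 0$ for all $\ell \leq \ell_2^\ast - 1$. Setting $E = E_1 + E_2$, statement $(\ast)$ produces an $\ell$ with $\rga(D_1 - E_1 - \ell w_1) \geq 0$ and $\rgb(D_2 - E_2 + \ell v_2)\geq 0$; the first inequality forces $\ell \leq \ell_1^\ast$ and the second forces $\ell \geq \ell_2^\ast$, whence $\ell_2^\ast \leq \ell_1^\ast$, contradicting $\ell_1^\ast < \ell_2^\ast$. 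Thus $\ell_2^\ast \leq \ell_1^\ast$, any $\ell$ in between witnesses the conclusion for $u$, and ranging over $u$ completes the proof. (The degenerate cases $r < 0$ are vacuous on both sides, and $r=0$ recovers Lemma \ref{lemma:effectiveGlue}.)
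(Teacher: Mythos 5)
Your proof is correct and follows essentially the same route as the paper's: both decompose the effective divisor $E$ across $\Gamma_1$ and $\Gamma_2$, apply Lemma \ref{lemma:effectiveGlue} to $D-E$, and then carry out the quantifier interchange $\forall E\,\exists \ell \Leftrightarrow \exists \ell\,\forall E$ using the fact that the relevant sets of $\ell$ are half-infinite intervals. The paper packages this last step via threshold functions $f(E_1)$, $g(E_2)$ rather than your explicit extremal witnesses, but the underlying argument is the same.
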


\begin{proof}
The inequality $\rg(D) \geq r$ means that for all divisors $E = E_1 + E_2$, where $E_1, E_2$ are both effective, $\rg((D_1 - E_1) + (D_2 - E_2)) \geq 0$. Let $f(E_1)$ denote the maximum $\ell \in \ZZ$ such that $\rga(D_1 - \ell w_1 - E_1) \geq 0$, and let $g(E_2)$ denote the minimum $\ell \in \ZZ$ such that $\rgb(D_2 + \ell v_2 - E_2) \geq 0$. Lemma \ref{lem:glueHelper} shows that $\rg(D - E) \geq 0$ if and only if $f(E_1) \geq g(E_2)$. 

For $u \in \{0, \cdots, r\}$, let $S(u)$ be the statement: for all $E = E_1 + E_2$, with $E_1, E_2$ effective of degrees $u, r-u$, $\rg(D-E) \geq r$. 
So $r_\Gamma(D) \geq r$ if and only if all of $S(0), \cdots, S(r)$ are true.
Then $S(u)$ is true if and only if $f(E_1) \geq g(E_2)$ for all degree $u$ $E_1$ and degree $r-u$ $E_2$. Since $E_1, E_2$ can be chosen independently, $S(u)$ is true if and only if there exists $\ell \in \ZZ$ such that $f(E_1) \geq \ell \geq g(E_2)$ for all such $E_1, E_2$, which is equivalent to the pair of inequalities $\rga(D_1 - \ell w_1) \geq u, \rgb(D_2  + \ell v_2) \geq r-u$.
\end{proof}

The following result provides the basic tool for studying divisors on vertex gluings, which may be understood as saying that ranks of such divisors can be computed as a tropical dot product. 
Special cases of this result have previously been used in \cite[Lemma 3.14]{pflChains} and \cite[Proposition 5.1]{borziWeierstrass}.

\begin{prop}
\label{prop:glueRank}
For a divisor $D = D_1 + D_2$ on $\Gamma$,
$$
\rg(D) = \min_{\ell \in \ZZ} \rga(D_1 - (\ell+1) w_1) + \rgb(D_2 + \ell v_2) + 1.
$$
\end{prop}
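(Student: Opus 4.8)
The plan is to show the two sides of the desired equation agree by relating each to the combinatorial criterion supplied by Lemma \ref{lem:glueHelper}. Set $N = \min_{\ell \in \ZZ} \big( \rga(D_1 - (\ell+1) w_1) + \rgb(D_2 + \ell v_2) + 1 \big)$; I want to prove $\rg(D) = N$. The natural strategy is to establish the two inequalities $\rg(D) \geq N$ and $\rg(D) \leq N$ separately, in each case invoking Lemma \ref{lem:glueHelper} with the appropriate value of $r$.

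For the inequality $\rg(D) \geq N$, suppose $N \geq 0$ (the case $N = -1$, i.e. $\rg(D)=-1$, should be handled by the same bookkeeping) and set $r = N - 1$; I claim $\rg(D) \geq r$, which I verify via Lemma \ref{lem:glueHelper}. Fix $u \in \{0, \dots, r\}$. I must produce an $\ell$ with $\rga(D_1 - \ell w_1) \geq u$ and $\rgb(D_2 + \ell v_2) \geq r - u$. The idea is to choose the index $\ell^\ast$ achieving the minimum defining $N$ and then argue that shifting the splitting index appropriately realizes the required pair of ranks: the defining inequality $\rga(D_1 - (\ell^\ast+1)w_1) + \rgb(D_2 + \ell^\ast v_2) + 1 = N = r+1$ distributes the total rank budget of $r$ between the two sides, and monotonicity of rank under subtracting $w_1$ (resp. adding $v_2$) lets me slide from the minimizing index to one that meets both thresholds $u$ and $r-u$ simultaneously.

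For the reverse inequality $\rg(D) \leq N$, I argue contrapositively: if $\rg(D) \geq r$ for a given $r$, then by Lemma \ref{lem:glueHelper} applied to each $u$, in particular to $u$ and to $u+1$, I extract splitting indices whose existence forces, for a single well-chosen $\ell$, the bound $\rga(D_1 - (\ell+1)w_1) + \rgb(D_2 + \ell v_2) + 1 \geq r$, and hence $N \geq r$; taking $r = \rg(D)$ gives $N \geq \rg(D)$. The key point is that the ``there exists $\ell$'' quantifier in Lemma \ref{lem:glueHelper}, applied across all $u$ at once, packages exactly into a statement about the function $\ell \mapsto \rga(D_1 - \ell w_1)$ crossing the function $\ell \mapsto r - \rgb(D_2 + \ell v_2)$, and the minimal crossing point is what the tropical sum in $N$ detects.

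The main obstacle I anticipate is the index bookkeeping: reconciling the ``$-\ell w_1$'' appearing in Lemma \ref{lem:glueHelper} with the ``$-(\ell+1)w_1$'' appearing in the statement, and correctly handling the $u$-dependence so that a \emph{single} $\ell$ works across the whole range $u \in \{0, \dots, r\}$. The cleanest route is probably to introduce the two monotone step functions $f(\ell) = \rga(D_1 - \ell w_1)$ and $h(\ell) = \rgb(D_2 + \ell v_2)$ (both nonincreasing, resp. nondecreasing, in $\ell$, and eventually $-1$ or stabilizing at the extremes), observe that Lemma \ref{lem:glueHelper} says $\rg(D) \geq r$ iff $\max_u \min(\dots)$-type condition holds, and then recognize $\rg(D) + 1 = \max_\ell \min\big(f(\ell) + 1,\ h(\ell-1) + 1\big)$ rearranges by a discrete min-max (or tropical) identity into $\min_\ell \big(f(\ell+1) + h(\ell) + 1\big)$. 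Once the monotonicity is set up explicitly, the equality should follow from comparing where each side's optimizing index sits, but getting the off-by-one shifts consistent throughout is where care is required.
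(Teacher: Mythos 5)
Your overall strategy---reduce everything to Lemma \ref{lem:glueHelper} and exploit the monotonicity of the step functions $f(\ell) = \rga(D_1 - \ell w_1)$ and $h(\ell) = \rgb(D_2 + \ell v_2)$---is the same as the paper's, but the concrete plan you outline has a fatal flaw: the ``discrete min-max identity'' you propose as the cleanest route is false. A minimum of sums is not a maximum of minima here. Concretely, take $\Gamma_1, \Gamma_2$ both of genus $0$, $D_1 = 0$ and $D_2 = 5v_2$, so that $f(\ell) = \max(-\ell,-1)$ and $h(\ell) = \max(\ell+5,-1)$. Then $\rg(D) = 5$ and indeed $\min_\ell\bigl(f(\ell+1)+h(\ell)+1\bigr) = 5$ (the sum is constantly $5$ along the whole ramp), but $\max_\ell \min\bigl(f(\ell)+1,\ h(\ell-1)+1\bigr) = 3$, since the max of the min is governed by the crossing point of the two ramps rather than by their sum. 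No constant shift of the indices repairs this, so that identity cannot be the engine of the proof. (Note also that Lemma \ref{lem:glueHelper} is a ``for all $u$ there exists $\ell$'' statement, i.e.\ of $\min_u \max_\ell$ type, not $\max_u \min$ type.)

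There are two further problems in the direct two-inequality version. Setting $r = N-1$ and proving $\rg(D)\geq r$ only yields $\rg(D)\geq N-1$; you need $r = N$. More importantly, ``sliding from the minimizing index $\ell^\ast$'' does not produce the witnesses that Lemma \ref{lem:glueHelper} demands: a different $\ell$ is needed for each $u$, and it is not found by perturbing $\ell^\ast$. The correct move (and the paper's) is to take, for each $u\geq 0$, the \emph{largest} $\ell_u$ with $\rga(D_1-\ell_u w_1)\geq u$, so that $\rga(D_1-(\ell_u+1)w_1)= u-1$; the definition of $N$ as a minimum then forces $\rgb(D_2+\ell_u v_2)\geq N-u$, which is exactly the required witness for $\rg(D)\geq N$. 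The paper packages both inequalities at once by rewriting $\rg(D)=\min_{u\geq 0}\bigl(u+\rgb(D_2+\ell_u v_2)\bigr)$ and checking that $s(\ell)=\rga(D_1-(\ell+1)w_1)+\rgb(D_2+\ell v_2)+1$ attains its minimum at one of the $\ell_u$, where it takes precisely the value $u+\rgb(D_2+\ell_u v_2)$. Your proposal correctly identifies the ingredients, but as written the argument would not close.
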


\begin{proof}
First note that Lemma \ref{lem:glueHelper} remains true if we replace ``$u \in \{0, 1, \cdots, r\}$'' with ``$ u \geq 0$'' since for any $u \geq r+1$, the inequality $\rgb(D_2 + \ell v_2) \geq r-u$ is true for \emph{all} $\ell \in \ZZ$. For all $u \geq 0$, define
$$f(u) = \max \{\ell \in \ZZ: \rga(D_1 - \ell w_1) = u \}.$$
There exists \emph{some} $\ell$ such that $\rga(D_1 - \ell w_1) \geq u$ and $\rgb(D_2 + \ell v_2) \geq r-u$ if and only if $\ell = f(u)$ is such an integer. Therefore we may rewrite
$$\rg(D) = \min_{u \geq 0}\ u + \rgb(D_2 + f(u) v_2).$$
Turning to the right side of the claimed equation, consider the function 
$$s(\ell) = \rga(D_1 - (\ell+1)w_1) + \rgb(D_2 + \ell v_2) + 1.$$
For all $u \geq 0$, the maximality of $f(u)$ implies $\rga(D_1 - (f(u)+1)w_1) = u-1$, so $s(f(u)) = u + \rgb( D_2 + f(u) v_2)$. On the other hand, if $\ell$ is an integer that is \emph{not} equal to $f(u)$ for any $u \geq 0$, then $\rga(D_1 - (\ell+1)w_1) = \rga(D_1 - \ell w_1)$, and therefore $s(\ell-1) \leq s(\ell)$. It follows that $\min_{\ell \in \ZZ} s(\ell) = \min_{u \geq 0} s(f(u))$. We have seen that the right side is equal to $\rg(D)$.
\end{proof}

\begin{proof}[Proof of Theorem \ref{thm:chaining}] For all $a,b \in \ZZ$, we may split $D + (a-1) v_1 - b w_2$ across $\Gamma_1$ and $\Gamma_2$ as $(D_1 + (a-1)v_1) + (D_2 - b w_2)$. Proposition \ref{prop:glueRank} implies
\begin{eqnarray*}
s^{v_1, w_2}_D(a,b) &=& \min_{\ell \in \ZZ} r_{\Gamma_1} (D_1 + (a-1) v_1 - (\ell+1) w_1) + r(D_2 +\ell v_2 - b w_2),
\end{eqnarray*}
and this is $ \ds \min_{\ell \in \ZZ} s^{v_1, w_1}_{D_1} (a,\ell+1) + s^{v_2, w_2}_{D_2}(\ell+1,b) = s^{v_1, w_1}_{D_1} \star s^{v_2, w_2}_{D_2}(a,b)$.
\end{proof}

\begin{proof}[Proof of Theorem \ref{thm:main}] \label{proof:main}
Suppose that $(\Gamma_1, v_1, w_1)$ and $(\Gamma_2, v_2, w_2)$ both have general transmission. Let $g_i$ be the genus of $\Gamma_i$; the genus of $\Gamma$ is then $g_1 + g_2$. By Theorems \ref{thm:gamma} and \ref{thm:chaining}  every divisor $D = D_1 + D_2$ is submodular, its transmission permutation lies in $\ts{k}$, and $\inv_k \tau^{v_1, w_2}_{D} \leq \inv_k \tau^{v_1, w_1}_{D_1} + \inv_k \tau^{v_2, w_2}_{D_2} \leq g_1 + g_2$, so $(\Gamma, v_1, w_2)$ has $k$-general transmission. Corollary \ref{cor:genus1GT} verified that $k$-general transmission is equivalent to $k$-torsion in genus $1$; by induction a chain of $k$-torsion loops has $k$-general transmission.
\end{proof}

\begin{eg}
\label{eg:chainLoops}
Consider the chain $(\Gamma, v_1, w_g)$ of $k$-torsion loops considered in \cite{pflKGonal, jensenRanganathan, cpjComponents, cpjMethods}, consisting of a chain of marked cycles $(E_i,v_i,w_i)$, $i=1,\cdots,g$, in which there is a path of length $1$ and a path of length $k-1$ between $v_i$ and $w_i$ in cycle $E_i$, and $w_i$ is glued to $v_{i+1}$ for $i=1,\cdots,g-1$. For now, regard $\Gamma$ as a \emph{finite} graph. Denote by $\langle \xi \rangle_i \in E_i$ the point located $i$ units clockwise from $w_i$ (when the length $1$ path to $v_i$ is drawn on top); note that $\langle \xi \rangle_i \sim w_i + \xi (w_i-v_i)$ as a degree-$1$ divisor on $E_i$. Every degree $g$ divisor on $\Gamma$ is linearly equivalent to a unique \emph{break divisor} of the form $D = \sum_{i=1}^g \langle \xi_i \rangle_i$, consisting of a single chip on each cycle. Combining Lemma \ref{lem:genus1} and Theorem \ref{thm:chaining} gives the following formula.
\begin{equation}
\label{eq:chainBreak}
\tau^{v_1,w_g}_D = \sigma^k_{\xi_1} \star \sigma^k_{\xi_2} \star \cdots \star \sigma^k_{\xi_g}
\end{equation}
In particular, if $\tau$ is in the affine symmetric group, then the set of divisors with $\tau^{v_1,w_g}_D = \tau$ is in bijection with reduced words for $\tau$ in the generators $\sigma^k_1, \cdots, \sigma^k_k$. In particular, every element $\tau \in \ts{k}$ occurs as a transmission permutation on such a chain.

If we instead regard $\Gamma$ as a \emph{metric} graph, we must allow $\xi_i$ to take non-integer values, but the only change needed to Equation \eqref{eq:chainBreak} is that $\sigma^k_{\xi_i}$ should be regarded as the identity permutation when $\xi_i$ is not an integer. This analysis generalizes in a straightforward manner to any chain of loops, including those with varying torsion orders. One must show that the Demazure products are well-defined in this case, which will be done in a forthcoming paper.
\end{eg}

%%%%%
%%%%%
\section{$0$-general transmission and Brill--Noether generality}
\label{sec:bnGeneral}

We now focus on $k=0$ and demonstrate that $0$-general transmission implies a form of Brill--Noether generality.
Theorem \ref{thm:bnGeneral} follows immediately from the definition of $0$-general transmission, and the following. \label{proof:bnGeneral}

\begin{lemma}
\label{lem:invBound}
Let $D$ be a divisor on $\Gamma$ that is submodular with respect to $v,w$, let $r = r(D)$, and abbreviate $\tau^{v,w}_D$ by $\tau$.
Let $(a_i)_{0 \leq i \leq r}, (b_i)_{0 \leq i \leq r}$ be the vanishing sequences at $v,w$ respectively.
Then
$$\inv_0 \tau \geq (r+1)(g-d+r) + \sum_{i=0}^r (a_i-i) + \sum_{i=0}^r (b_i-i).$$
\end{lemma}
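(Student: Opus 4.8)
The plan is to translate the Brill--Noether number on the right-hand side into a count of inversions of $\tau$, by identifying for each of the three groups of terms a corresponding family of inversions $(u,v) \in \Inv(\tau)$, and checking that these families are disjoint so that their sizes add up to a lower bound for $\inv_0 \tau$. The key computational input is the defining relation of the transmission permutation in the almost-sign-preserving notation: if $\tau = \tau^{v,w}_D$, then $r(D + av - bw) + 1 = \#\{n \geq b : \tau(n) \leq a\}$. I would first extract from the rank and vanishing-sequence data the precise positions at which $\tau$ takes various values, then read off inversions from these positions.

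\textbf{Setting up the dictionary.} First I would recall what the vanishing sequences encode. By definition $a_i = \max\{a : r(D - av) \geq r - i\}$ and similarly for $b_i$ at $w$. Using Equation \eqref{eq:rd} with $b = 0$, the condition $r(D - av) \geq r-i$ becomes a statement about $\#\{n \geq 0 : \tau(n) \leq -a\}$, which pins down where the ``small'' values of $\tau$ sit relative to the cutoff $0$. Concretely, I expect $r(D) = r$ to translate (via Equation \eqref{eq:rd} at $a=b=0$) into $\#\{n \geq 0 : \tau(n) \leq 0\} = r+1$, and the full vanishing sequence at $v$ to record, for each $i$, how far one must push the threshold in $a$ before the count of $\{n \geq 0 : \tau(n) \leq a\}$ drops below $r+1-i$. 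The upshot should be an explicit description: there are exactly $r+1$ values $n_0 < n_1 < \cdots < n_r$ with $n_0 \geq 0$ (these are the ``good'' positions $\geq 0$ carrying values $\leq 0$), and the $a_i$ measure the gaps between the values $\tau(n_i)$ and the reference increasing permutation. The dual statement \eqref{eq:rkd} does the same for $b_i$ using $r(K - D)$, controlling positions $n < 0$.

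\textbf{Counting the inversions.} The three summands should each arise as follows. The term $(r+1)(g-d+r)$ counts pairs $(u, n_j)$ where $u < n_j$ but $\tau(u) > \tau(n_j)$ coming from the $g - d + r$ ``missing'' large values that must appear before each of the $r+1$ good positions (this is the classical rectangle of inversions underlying the expected codimension $(r+1)(g-d+r)$). The term $\sum (a_i - i)$ should count extra inversions among positions $\geq 0$ forced by the values $\tau(n_i)$ being larger than their baseline $-i$, i.e. the excess vanishing at $v$; symmetrically $\sum(b_i - i)$ counts inversions involving positions $n < 0$ forced by excess vanishing at $w$, which the dual relation \eqref{eq:rkd} makes visible. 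The main task is to exhibit these three families concretely as subsets of $\Inv(\tau)$ and to verify they are pairwise disjoint, so that
$$\inv_0 \tau = \#\Inv(\tau) \geq (r+1)(g-d+r) + \sum_{i=0}^r (a_i - i) + \sum_{i=0}^r (b_i - i).$$

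\textbf{The main obstacle} will be bookkeeping the disjointness and the exact indexing, especially coordinating the two dual descriptions (one from \eqref{eq:rd} governing $a_i$ and positions $\geq 0$, one from \eqref{eq:rkd} governing $b_i$ and positions $< 0$) so that no inversion is double-counted across the $v$-family and the $w$-family. I would guard against this by partitioning $\Inv(\tau)$ according to whether the relevant indices lie in $\{n \geq 0\}$ or $\{n < 0\}$ and according to whether the values lie above or below the threshold, turning the inequality into a clean sum over disjoint index blocks; the inequality (rather than equality) arises precisely because I only claim a lower bound and may discard inversions not falling into these three families.
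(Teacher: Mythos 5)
Your plan is essentially the paper's proof. The paper sets $T = \{(n,\tau(n)) : n \geq 0,\ \tau(n) \leq 0\}$, uses Equation \eqref{eq:rd} to identify $T = \{(b_i, -a_{\sigma(i)}) : 0 \leq i \leq r\}$ for some permutation $\sigma$ of $\{0,\dots,r\}$, and then, for each $i$, exhibits three disjoint sets of integers $n$ with $(n,b_i) \in \Inv(\tau)$: the rectangle $S = \{n<0 : \tau(n)>0\}$ of size $g-d+r$ (via Equation \eqref{eq:rkd}), $A_i = \{n<0 : 0 \geq \tau(n) > \tau(b_i)\}$ of size $a_{\sigma(i)}-\sigma(i)$, and $B_i = \{n \geq 0 : n < b_i,\ \tau(n)>0\}$ of size $b_i - i$; summing over $i$ gives the bound. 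That is exactly your proposed partition of inversions by sign of position and sign of value. One correction to your bookkeeping, which as written is wrong: you have the two ramification families swapped. The excess $\sum_i(a_i-i)$ at $v$ is realized by inversions whose \emph{first} coordinate is negative — the values in $(\tau(b_i),0]$ not of the form $-a_j$ are attained at positions $n<0$, since $\{a_0,\dots,a_r\} = \{a \geq 0 : \tau^{-1}(-a) \geq 0\}$ — not by "inversions among positions $\geq 0$" as you claim; dually, $\sum_i(b_i-i)$ is realized by inversions with both coordinates nonnegative ($0 \leq n < b_i$ with $\tau(n)>0$), not by inversions involving $n<0$. The families you describe would therefore not have the cardinalities you assign them. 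Since the final bound adds all three contributions, the mislabeling is harmless once corrected, and the rest of your outline (the dictionary from \eqref{eq:rd}/\eqref{eq:rkd}, the disjointness strategy, and the source of the inequality from discarding uncounted inversions) matches the paper.
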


%%%%%
%%%%%
\begin{figure}
\begin{tabular}{ccc}
\begin{tikzpicture}[scale=0.2]
%shaded regions
\fill[lightgray] (-0.5, 0.5) rectangle (-7,8);

\draw[->] (-7,0) -- (8,0) node[right] {$n$};
\draw[->] (0,-6) -- (0,8) node[above] {$\tau(n)$};

% The main quandrant par of alpha
\node [fill=black, circle, inner sep=1pt] at (0,-3) {};
\node [fill=black, circle, inner sep=1pt] at (2,-1) {};
\node [fill=black, circle, inner sep=1pt] at (5,-2) {};
\draw (5,-2) node[below right] {$(b_i, -a_{\sigma(i)})$};
\draw[dotted] (4.5,-1.5) rectangle (5.5,-2.5);
\node [fill=black, circle, inner sep=1pt] at (6,0) {};

% The  opposite quadrant
\node [fill=black, circle, inner sep=1pt] at (-2,1) {};
\node [fill=black, circle, inner sep=1pt] at (-1,2) {};

% upper-right
\node [fill=black, circle, inner sep=1pt] at (1,3) {};
\node [fill=black, circle, inner sep=1pt] at (3,4) {};
\node [fill=black, circle, inner sep=1pt] at (4,5) {};
\node [fill=black, circle, inner sep=1pt] at (7,6) {};
%\node at (8.5,8) {$\iddots$};

% lower-left
\node [fill=black, circle, inner sep=1pt] at (-3,-4) {};
%\node at (-5,-5) {$\iddots$};

% labels of regions sizes
\end{tikzpicture}
&
\begin{tikzpicture}[scale=0.2]
%shaded regions
\fill[lightgray] (-0.5, -1.5) rectangle (-7,0.5);

\draw[->] (-7,0) -- (8,0) node[right] {$n$};
\draw[->] (0,-6) -- (0,8) node[above] {$\tau(n)$};

% The main quandrant par of alpha
\node [fill=black, circle, inner sep=1pt] at (0,-3) {};
\node [fill=black, circle, inner sep=1pt] at (2,-1) {};
\node [fill=black, circle, inner sep=1pt] at (5,-2) {};
\draw (5,-2) node[below right] {$(b_i, -a_{\sigma(i)})$};
\draw[dotted] (4.5,-1.5) rectangle (5.5,-2.5);
\node [fill=black, circle, inner sep=1pt] at (6,0) {};

% The  opposite quadrant
\node [fill=black, circle, inner sep=1pt] at (-2,1) {};
\node [fill=black, circle, inner sep=1pt] at (-1,2) {};

% upper-right
\node [fill=black, circle, inner sep=1pt] at (1,3) {};
\node [fill=black, circle, inner sep=1pt] at (3,4) {};
\node [fill=black, circle, inner sep=1pt] at (4,5) {};
\node [fill=black, circle, inner sep=1pt] at (7,6) {};
%\node at (8.5,8) {$\iddots$};

% lower-left
\node [fill=black, circle, inner sep=1pt] at (-3,-4) {};

\end{tikzpicture}

&
\begin{tikzpicture}[scale=0.2]
%shaded regions
\fill[lightgray] (-0.5, 0.5) rectangle (4.5,8);

\draw[->] (-7,0) -- (8,0) node[right] {$n$};
\draw[->] (0,-6) -- (0,8) node[above] {$\tau(n)$};

% The main quandrant par of alpha
\node [fill=black, circle, inner sep=1pt] at (0,-3) {};
\node [fill=black, circle, inner sep=1pt] at (2,-1) {};
\node [fill=black, circle, inner sep=1pt] at (5,-2) {};
\draw (5,-2) node[below right] {$(b_i, -a_{\sigma(i)})$};
\draw[dotted] (4.5,-1.5) rectangle (5.5,-2.5);
\node [fill=black, circle, inner sep=1pt] at (6,0) {};

% The  opposite quadrant
\node [fill=black, circle, inner sep=1pt] at (-2,1) {};
\node [fill=black, circle, inner sep=1pt] at (-1,2) {};

% upper-right
\node [fill=black, circle, inner sep=1pt] at (1,3) {};
\node [fill=black, circle, inner sep=1pt] at (3,4) {};
\node [fill=black, circle, inner sep=1pt] at (4,5) {};
\node [fill=black, circle, inner sep=1pt] at (7,6) {};
%\node at (8.5,8) {$\iddots$};

% lower-left
\node [fill=black, circle, inner sep=1pt] at (-3,-4) {};
%\node at (-5,-5) {$\iddots$};

\end{tikzpicture}
\\
$|S| = (g-d+r)(r+1)$
&
$|A_i| = a_{\sigma(i)}-\sigma(i)$
& 
$|B_i| = b_i - i$
\end{tabular}
\caption{The sets in Lemma \ref{lem:invBound}. In this example, $r=3$, $g-d+r = 2$, and $(a_0, a_1, a_2, a_3) = (0, 1,2,3)$ and $(b_0, b_1, b_2, b_3) = (0,2,5,6)$.
}
\label{fig:SSi}
\end{figure}
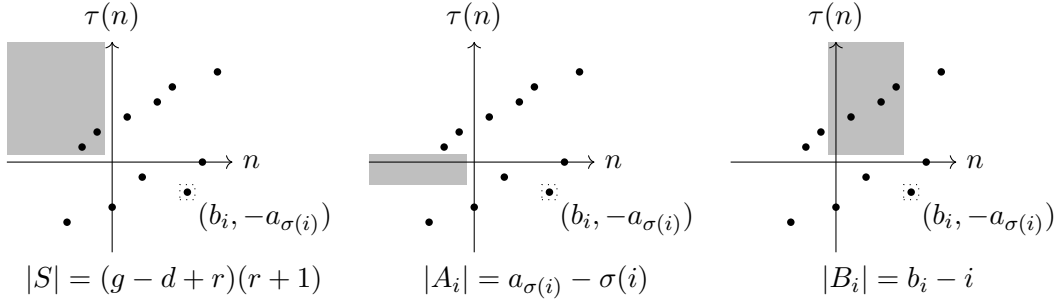

\begin{proof}
The sets of vanishing orders are $\{a_0, \cdots, a_r\} = \{a \geq 0: r(D - av) > r(D - (a+1)v)$ and $\{b_0, \cdots, b_r \} = \{b \geq 0: r(D - bw) > r(D-(b+1)w)\}$. By Equation \eqref{eq:rd}, these sets are $\{n \geq 0: \tau^{-1}(-n) \geq 0\}$ and $\{ n \geq 0: \tau(n) \leq 0 \}$, respectively. Let $T = \{ (n, \tau(n)):\ n \geq 0 \mbox{ and } \tau(n) \leq 0\}$. It follows that there is a permutation $\sigma$ of $\{0, \cdots, r\}$ such that $T = \{ (b_i, -a_{\sigma(i)}):\ 0 \leq i \leq r\}$.

Fix a choice $0 \leq i \leq r$. We will bound the number of integers $n$ such that $(n, b_i) \in \Inv_0(\tau)$. The set of such $n$ certainly includes each of the following three disjoint sets, illustrated in Figure \ref{fig:SSi}.
\begin{eqnarray*}
S &=& \{ n < 0:\ \tau(n) > 0 \}\\
A_i &=& \{ n < 0: 0 \geq \tau(n) >  \tau(b_i) \}\\
B_i &=& \{ n \geq 0:\  n < b_i,\ \tau(n) > 0 \}
\end{eqnarray*}
Equation \eqref{eq:rkd} implies that $|S| = g-d+r$, and our description of the sets of vanishing orders above implies that $|A_i| = a_{\sigma(i)}-\sigma(i)$ and $|B_i| = b_i - i$. Summing over $i$ gives
$
\inv_0(\tau) \geq \sum_{i=0}^{r} \left( |S| + |A_i| + |B_i| \right) 
= \sum_{i=0}^r (g-d+r) + \sum_{i=0}^r (a_{\sigma(i)} - \sigma(i) ) + \sum_{i=0}^r (b_i - i)
$
and therefore $\inv_0(\tau) \geq (r+1)(g-d+r) + \sum_{i=0}^r (a_i-i) + \sum_{i=0}^r (b_i-i)$, as desired.
\end{proof}

%%%%%
%%%%%
\section{$k$-general transmission and splitting loci}
\label{sec:splittingLoci}

This section briefly summarizes the notion of \emph{splitting type loci} on graphs, which play the role of $W^r_d$ in Hurwitz--Brill--Noether theory.
Initial work on Hurwitz--Brill--Noether theory \cite{cm99, cm02, pflKGonal, jensenRanganathan} concerned the same census as Brill--Noether theory, but for a general $k$-gonal curve: which degrees and ranks occur? Later, Cook-Powell--Jensen \cite{cpjComponents, cpjMethods} and Larson \cite{larsonHBN} independently realized that this census could be refined, and one should instead classify the \emph{splitting types} of divisors on $k$-gonal curves.
This means that line bundles $\cL$ on a curve with degree-$k$ cover $\pi: C \to \PP^1$ are classified by the isomorphism class of $\pi_\ast \cL$. This is a rank-$k$ vector bundle on $\PP^1$, so there exists a unique nondecreasing $k$-tuple $\bmu = (\mu_1, \cdots, \mu_k)$ such that $\pi_\ast \cL \cong \cO_C(\mu_1) \oplus \cdots \oplus \cO_C(\mu_k)$. This $k$-tuple $\bmu$, called the splitting type, determines the rank and degree of $\cL$ and considerably more information. Splitting types also have the virtue of determining irreducible loci.
For a comprehensive discussion of Hurwitz--Brill--Noether theory, see the recent \emph{tour de force} \cite{larsonLarsonVogt}, which also makes systematic use of the affine symmetric group and explains its role in detail.
On the combinatorial side, ``splitting'' cannot be taken so literally, but it is still possible to define splitting type loci numerically. 

\begin{defn}
Let $\bmu = (\mu_1, \cdots, \mu_k)$ be a nondecreasing $k$-tuple of integers. Define 
$$
x_m(\mu) = \sum_{i=1}^k \max \{0, \mu_i + m + 1\}
$$
for all $m \in \ZZ$, and let $d(\bmu) = g-1 + \sum_{i=1}^k (\mu_i + 1)$. 

Let $D$ be a divisor on a graph $\Gamma$, and $F$ a degree-$k$ divisor with $r(F) \geq 1$. We say that $D$ has  \emph{splitting type $\bmu$} (with respect to $F$) if its class belongs to the \emph{splitting type locus}
$$
W^\bmu(\Gamma) = \{ [D] \in \Pic^{d(\bmu)}(\Gamma): r(D + m F) = x_m(\bmu) -1\mbox{ for all } m \in \ZZ \}.
$$
The \emph{expected codimension} of this locus is $|\bmu| = \sum_{i<j} \max \{0, \mu_j - \mu_i - 1\}$. 
\end{defn}

%Note that this definition does depend on the choice of divisor class $F$, even though it is left implicit in the notation $W^\bmu(\Gamma)$. This is a vestige of the fact that, for a $k$-gonal algebraic curve, the gonality pencil is unique.
%Typically, one also considers the closed loci $\overline{W}^{\bmu}(\Gamma)$, where the equation $r(D + mF) \geq x_m(\bmu)$ is replaced by an inequality. For our purposes we focus on the strict loci $W^\bmu$.

It is not obvious that every divisor $D$ on a graph even \emph{has} a splitting type, and indeed this may not be true! 
The issue is analous the the issue that not all divisors on twice-marked graphs are submodular: a splitting type $\bmu$ such that $[D] \in W^\bmu(\Gamma)$ exists if and only if the difference $r(D + mF) - r(D+(m-1)F)$ is nondecreasing in $m$.
Part of Theorem \ref{thm:hbnGeneral} is that $k$-general transmission guarantees that all divisors $D$ meet this condition.

We now specialize to our application: suppose $(\Gamma, v, w)$ is a twice-marked graph with $k$-general transmission, and let $F = k v$. 

\begin{lemma}
\label{lem:Fg1k}
With the assumptions above, $kv \sim kw$ and the rank of $F = kv$ is at least $1$.
\end{lemma}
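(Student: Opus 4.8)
The plan is to read both conclusions off from the single transmission permutation of the \emph{zero} divisor. Write $\tau := \tau^{v,w}_0$. Since $(\Gamma,v,w)$ has $k$-general transmission, the zero divisor is submodular, so $\tau$ exists and $\tau \in \ts{k}$, i.e. $\tau(n+k) = \tau(n)+k$ for all $n$. The only external rank input I need is $r(0) = 0$ (the zero divisor is effective, but subtracting any point drops the degree below $0$; equivalently this is immediate from Riemann--Roch). Applying Equation \eqref{eq:rd} to $D = 0$ with $a = b = 0$ then gives
$$1 = r(0) + 1 = \#\{n \geq 0 : \tau(n) \leq 0\},$$
so there is an integer $m_0 \geq 0$ with $\tau(m_0) \leq 0$. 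This single fact, combined with the $k$-periodicity, yields both claims.

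For $kv \sim kw$ I would argue as follows. By periodicity $\tau(m_0 + k) = \tau(m_0) + k \leq k$, and $m_0 + k \geq k$. Hence the set counted by Equation \eqref{eq:rd} for $D = 0$, $a = k$, $b = k$ is nonempty:
$$r(kv - kw) + 1 = \#\{n \geq k : \tau(n) \leq k\} \geq 1.$$
Thus $r(kv - kw) \geq 0$; since $kv - kw$ has degree $0$ on a connected graph, being equivalent to an effective divisor forces $kv - kw \sim 0$, i.e. $kv \sim kw$.

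For $r(kv) \geq 1$ I would instead apply Equation \eqref{eq:rd} to $D = 0$ with $a = k$, $b = 0$, giving $r(kv) + 1 = \#\{n \geq 0 : \tau(n) \leq k\}$. Both $m_0$ and $m_0 + k$ lie in this set, since $\tau(m_0) \leq 0 \leq k$ and $\tau(m_0 + k) \leq k$ as above, and they are distinct because $k \geq 2$. Hence the count is at least $2$, so $r(kv) \geq 1$.

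I do not anticipate a genuine obstacle here: all the content sits in the periodicity $\tau \in \ts{k}$ guaranteed by $k$-general transmission, together with the normalization $r(0) = 0$. The only points requiring care are bookkeeping ones, namely getting the indices $(a,b)$ right in Equation \eqref{eq:rd}; note in particular that the \emph{uniqueness} of $m_0$ is not needed, only the existence of one $m_0 \geq 0$ with $\tau(m_0) \leq 0$. It is worth recording that this also reproves, with no extra work, the genus-$1$ fact that $k$-general transmission entails $kv \sim kw$, consistent with Corollary \ref{cor:genus1GT}.
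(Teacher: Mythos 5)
Your proposal is correct and follows essentially the same route as the paper: both extract, from $r(0)=0$ and Equation \eqref{eq:rd}, an $m \geq 0$ with $\tau^{v,w}_0(m) \leq 0$, then use the $k$-periodicity of $\tau^{v,w}_0 \in \ts{k}$ to place $m+k$ in $\{n \geq k : \tau(n) \leq k\}$ and $\{m, m+k\}$ in $\{n \geq 0 : \tau(n) \leq k\}$, yielding $r(kv-kw) \geq 0$ and $r(kv) \geq 1$. Your added remarks (that uniqueness of $m$ is not needed, and that $k \geq 2$ ensures $m \neq m+k$) are accurate but do not change the argument.
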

\begin{proof}
Note that $kv \sim kw$ is equivalent to $r(kv-kw) \geq 0$, so we must show that $r(kv-kw) \geq 0$ and $r(kv) \geq 1$.
Since $\Gamma$ has $k$-general transmission, the empty divisor $D=0$ has a transmission permutation $\tau \in \ts{k}$. Since $r(D) = 0$, it follows that there is a (unique) $m \in \ZZ$ such that $m \geq 0, \tau(m) \leq 0$. Since $\tau(m+k) = \tau(m) + k$, it follows that $m+k \in \{ n \geq k: \tau(n) \leq k \}$ and $\{m, m+k\} \subseteq \{ n \geq 0: \tau(n) \leq k \}$. Equation \eqref{eq:rd} implies that $r(kv - kw) \geq 0$ and $r(kv) \geq 1$.
\end{proof}

%%%%%
%%%%%
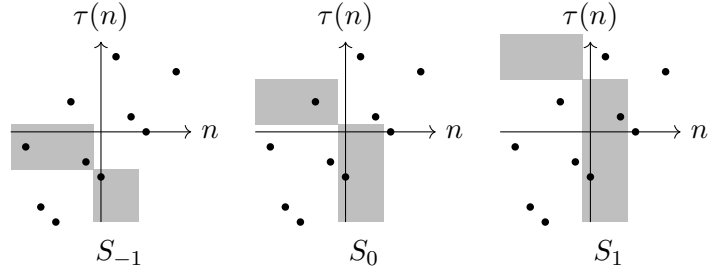
\begin{figure}
\begin{tabular}{ccc}
\begin{tikzpicture}[scale=0.2]
%shaded regions
\def\m{-1}
\fill[lightgray] (-0.5, \m*3+0.5) rectangle (2.5,-6);
\fill[lightgray] (-0.5, \m*3+0.5) rectangle (-6,\m*3+3.5);

\draw[->] (-6,0) -- (6,0) node[right] {$n$};
\draw[->] (0,-6) -- (0,6) node[above] {$\tau(n)$};

% The graph of the ext affine perm (-3,5,1), k=3
\foreach \m in {-1,...,1} {
\node [fill=black, circle, inner sep=1pt] at (0+3*\m,-3+3*\m) {};
}
\foreach \m in {-2,...,0} {
\node [fill=black, circle, inner sep=1pt] at (1+3*\m,5+3*\m) {};
}
\foreach \m in {-2,...,1} {
\node [fill=black, circle, inner sep=1pt] at (2+3*\m,1+3*\m) {};
}
\end{tikzpicture}
&
\begin{tikzpicture}[scale=0.2]
%shaded regions
\def\m{0}
\fill[lightgray] (-0.5, \m*3+0.5) rectangle (2.5,-6);
\fill[lightgray] (-0.5, \m*3+0.5) rectangle (-6,\m*3+3.5);

\draw[->] (-6,0) -- (6,0) node[right] {$n$};
\draw[->] (0,-6) -- (0,6) node[above] {$\tau(n)$};

% The graph of the ext affine perm (-3,5,1), k=3
\foreach \m in {-1,...,1} {
\node [fill=black, circle, inner sep=1pt] at (0+3*\m,-3+3*\m) {};
}
\foreach \m in {-2,...,0} {
\node [fill=black, circle, inner sep=1pt] at (1+3*\m,5+3*\m) {};
}
\foreach \m in {-2,...,1} {
\node [fill=black, circle, inner sep=1pt] at (2+3*\m,1+3*\m) {};
}
\end{tikzpicture}
&
\begin{tikzpicture}[scale=0.2]
%shaded regions
\def\m{1}
\fill[lightgray] (-0.5, \m*3+0.5) rectangle (2.5,-6);
\fill[lightgray] (-0.5, \m*3+0.5) rectangle (-6,\m*3+3.5);

\draw[->] (-6,0) -- (6,0) node[right] {$n$};
\draw[->] (0,-6) -- (0,6) node[above] {$\tau(n)$};

% The graph of the ext affine perm (-3,5,1), k=3
\foreach \m in {-1,...,1} {
\node [fill=black, circle, inner sep=1pt] at (0+3*\m,-3+3*\m) {};
}
\foreach \m in {-2,...,0} {
\node [fill=black, circle, inner sep=1pt] at (1+3*\m,5+3*\m) {};
}
\foreach \m in {-2,...,1} {
\node [fill=black, circle, inner sep=1pt] at (2+3*\m,1+3*\m) {};
}
\end{tikzpicture}
\\
$S_{-1}$
&
$S_0$
& 
$S_1$
\end{tabular}
\caption{The construction of $S_m$ in the proof of Theorem \ref{thm:hbnGeneral}. In this example, $k=3$, and $\tau(0) = -3,\ \tau(1) = 5,\ \tau(2) = 1$.
}
\label{fig:Sm}
\end{figure}
%%%%%
%%%%%

\begin{proof}[Proof of Theorem \ref{thm:hbnGeneral}] \label{proof:hbnGeneral}
Suppose $(\Gamma, v, w)$ has $k$-general transmission. The first two claims of the theorem are proved in Lemma \ref{lem:Fg1k}. It suffices to prove that for every divisor $D$ on $\Gamma$ with transmission permutation $\tau$, $D$ belongs to a splitting type locus $W^\bmu(\Gamma)$ such that $|\bmu| \leq \inv_k \tau$. Fix a divisor $D$ on $\Gamma$, and let $\tau = \tau^{v,w}_D$.
Define, for all $m \in \ZZ$,
$x_m = r(D + mF).$
Then $x_m - x_{m-1} = \# \{n: 0 \leq n < k, \tau(n) \leq mk \}$. The sets in this last expression are nested, so $x_m - x_{m-1}$ is nondecreasing in $m$. By the discussion above Lemma \ref{lem:Fg1k}, there exists a splitting type $\bmu$ such that
$r(D+mF) = x_m(\bmu)-1$ for all $m \in \ZZ$. This implies that $\deg D = d(\bmu)$, since for $m \gg 0$ we have $r(D + mF) = \deg D + mk -g$ and $x_m(\bmu) = \sum_{i=1}^k \mu_i + mk + k = d(\bmu) +mk - g$. Therefore
 $[D] \in W^\bmu(\Gamma)$. It remains to prove that $|\bmu| \leq \inv_k \tau$.

For every $m \in \ZZ$, define the following subset of $\Inv(\tau)$, which are illustrated in Figure \ref{fig:Sm}.

\newcommand{\floor}[1]{\left\lfloor #1 \right\rfloor}
$$S_m = \left\{ (i,j):\  i < 0 \leq j < k,\  \floor{\frac{\tau(j)-1}{k} } < m = \left\lfloor \frac{\tau(i)-1}{k} \right\rfloor \right\}$$

The definition immediately implies that the $S_m$ are pairwise disjoint. Their cardinalities are:
\begin{align*}
|S_m| %&=& \# \left\{ i < 0:\ \floor{ \frac{\tau(i)-1}{k} } = m \right\} \cdot \# \left\{ j:\ 0 \leq j < k, \floor{ \frac{\tau(j)-1}{k} } < m \right\}\\
&= \left\{ i < 0:\  mk < \tau(i) \leq (m+1)k \right\} \cdot \# \left\{ j:\ 0 \leq j < k, \tau(j) \leq mk \right\}\\
&= \left( k - \# \left\{i \geq 0:\ mk < \tau(i) \leq (m+1)k \right\} \right) \cdot \# \left\{ j:\ 0 \leq j < k, \tau(j) \leq mk \right\}\\
&= \left( k - r( D + (m+1)kv) + r(D + mkv) \right) \cdot \left( r(D+mkv) - r(D+mkv - kw) \right)\\
&= (k - x_{m+1} + x_m) \cdot (x_m - x_{m-1})
\end{align*}
By definition of $\bmu$, $x_m - x_{m-1} = \# \{i: \mu_i \geq -m\}$, and similarly $x_{m+1} - x_m = \# \{i: \mu_i \geq-m-1\}$, so $k - x_{m+1} + x_m = \# \{i: \mu_i < -m-1\}$. Therefore $|S_m| = \# \{ (i,j): \mu_i < -m-1 < \mu_j \}$. Summing these numbers for all $m$, the pair $(i,j)$ is counted precisely $\max \{ \mu_j - \mu_i - 1, 0 \}$ times, so $\sum_{m \in \ZZ} |S_m| = |\bmu|$, and $\inv_k \tau \geq |\bmu|$ as desired.
\end{proof}

%%%%%%%%%%
%%%%%%%%%%
\section{Questions for further work}
\label{sec:questions}

We conclude with a few questions naturally suggested by the two-pointed Brill--Noether theory of graphs studied in this paper.

\begin{qu}
Which permutations $\tau: \ZZ \to \ZZ$ occur as transmission permutation on graphs? The methods of this paper imply that all permutations with finitely many inversions and all extended affine permutations occur, but many other permutations are certainly possible, even for chains of loops (by choosing different torsion orders on different loops).
\end{qu}

\begin{qu}
The Brill--Noether existence question for \emph{finite} graphs \cite[Conjecture 3.10(1)]{bakerSpec} is still widely open; let's open it even wider. If $\tau \in \ts{0}$, does every twice-marked \emph{finite} graph $(\Gamma, v, w)$ of genus $g \geq \inv_0 \tau$ possess a divisor $D$ with $\svwd(a,b) \geq \st(a,b)$ for all $a,b \in \ZZ$?
\end{qu}

\begin{qu}
Are there natural criteria under which a submodular divisor on a graph lifts to a divisor with the \emph{same} transmission permutation on an algebraic curve? For the chain of loops with general torsion, lifting results have been obtain in \cite{cjp} for the case with no marked points, and \cite{heLifting} in the one-marked point case. For a chain with $k$-torsion, lifting results for certain splitting loci were obtained in \cite{jensenRanganathan}.
\end{qu}

\begin{qu}
Is it possible to classify (in some reasonable sense) the twice-marked graphs for which all divisors are submodular? Such twice-marked graphs better reflect the behavior of twice-marked algebraic curves.
\end{qu}

\begin{qu}
Which transmission functions $\svwd$ can occur for non-submodular divisors?
\end{qu}

%%%%%%%%%%
%%%%%%%%%%
\section*{Acknowledgements}
This work was supported by a Miner D. Crary Sabbatical Fellowship from Amherst College.
I am grateful to Sam Payne for conversations that encouraged me to write this paper, and Dave Jensen for helpful comments on an early draft.

%-----------
%-----------

\bibliographystyle{amsalpha}
\bibliography{../template/main}
\end{document}